\documentclass[12pt]{article}

\usepackage[margin=1in]{geometry}
\usepackage{bm}
\usepackage{graphicx}
\usepackage[colorlinks=true,linkcolor=blue,citecolor=blue,urlcolor=blue]{hyperref}
\usepackage{amsmath,amssymb,amsthm,mathtools,bm}
\usepackage{graphicx}
\usepackage{subcaption}
\usepackage{booktabs}
\usepackage{algorithm}
\usepackage{algpseudocode}
\usepackage{float}
\usepackage{stmaryrd}
\usepackage[mathscr]{eucal}
\usepackage{tikz}
\usetikzlibrary{arrows.meta,positioning,calc}
\usepackage{multirow}
\usepackage{indentfirst}

\newtheorem{theorem}{Theorem}[section]

\newtheorem{proposition}[theorem]{Proposition}

\newtheorem{definition}[theorem]{Definition}

\newcommand{\R}{\mathbb{R}}
\newcommand{\N}{\mathbb{N}}
\newcommand{\PP}{\mathcal{P}}

\newcommand{\dd}{\,\mathrm{d}}

\title{A Measure-Consistent Operator Learning Method for Infinite-Dimensional Master Equations}
\author{%
  Chenyao Wang$^{1,2}$, Hongyu Liu$^{2}$, Hui Liang$^{1,*}$ \\[0.5em]
  \parbox{0.92\textwidth}{\centering
    $^{1}$School of Science, Harbin Institute of Technology, Shenzhen, 518055 China\\
    $^{2}$Department of Mathematics, City University of Hong Kong, Hong Kong Kowloon, China\\
    Correspondence: lianghui@hit.edu.cn
  }
}
\date{June 6, 2026}

\begin{document}

\maketitle

\begin{abstract}
Master equations in mean field game theory characterize feedback value functions that depend on time, state (space), and the population distribution. Their numerical approximation is challenging because the unknown is defined on a space of probability measures and the equation involves intrinsic measure derivatives and nonlocal population terms. This paper proposes a measure-consistent operator learning method (MCOL) for infinite-dimensional master equations. The population distribution is represented by an empirical measure and encoded through a symmetric pooling structure, so that the network input is built directly from the particles representing the measure. The same particles are used in the empirical quadrature of the nonlocal residual terms, avoiding additional quadrature grids or auxiliary integration points. A key feature is that the intrinsic derivative appearing in the residual is induced by the same measure-dependent representation that defines the approximation of the value function. Consequently, the value function, its measure derivative, and the empirical residual are tied to a common measure representation, leading to a structurally coupled value-derivative approximation. We also introduce an error decomposition separating neural approximation error from empirical discretization error. Numerical experiments on several master equations show that MCOL accurately approximates the value function, intrinsic measure derivatives, and feedback quantities, and remains robust under changes in the input measures.
\end{abstract}

\noindent\textbf{Keywords:} Master equations; Measure-consistent operator learning; Empirical measure; Intrinsic derivative; Mean field games

\section{Introduction}
\label{sec:introduction}

Mean field games (MFGs), introduced independently by Huang, Caines and Malham\'e and by Lasry and Lions, provide a mathematical framework for the analysis of strategic interactions among a large number of weakly coupled agents \cite{Huang2006,LasryLions2007}. In the mean-field limit, an individual agent responds to the aggregate population distribution rather than to the states of all other agents separately. This formulation leads to tractable limiting models for large-population Nash equilibria and has been used in crowd dynamics, economics, finance and engineering systems \cite{Achdou20201,CarmonaFouqueSun2015,GuLauriereMerkelPayne2024}.

A central object in MFGs is the master equation. While a classical MFG system describes the equilibrium associated with a prescribed initial distribution, the master equation characterizes the feedback value function of a representative agent as a function of time, state, and the current population distribution. The unknown is an infinite-dimensional feedback value function of the form 
\[ U:[0,T]\times \Omega \times \mathcal{P}(\Omega)\to \mathbb{R}, \]
where \(\Omega\subseteq \mathbb{R}^d\) and \(\mathcal{P}(\Omega)\) denotes the set of Borel probability measures on \(\Omega\). The master equation can therefore be regarded as a Hamilton--Jacobi equation posed on a space of probability measures, and it encodes a family of MFG systems in a single feedback object \cite{Cardaliaguet2019}.

The analysis of master equations has developed substantially in recent years. Existing results address well-posedness, regularity, differentiability with respect to probability measures, stability, and the rigorous connection between finite-player Nash systems, classical MFG systems, and the limiting master equation \cite{Cardaliaguet2019,CarmonaDelarue2018I,GangboMeszarosMouZhang2022}. These results provide the theoretical foundation for the master equation formulation. From the computational viewpoint, however, the direct approximation of continuous-state master equations remains substantially more difficult than the numerical solution of classical MFG systems.

Most conventional numerical methods for MFGs are designed for the coupled Hamilton--Jacobi--Bellman and Fokker--Planck system associated with a fixed initial distribution. Representative approaches include finite difference methods and monotone schemes, semi-Lagrangian methods, variational formulations, particle methods, and stochastic numerical methods \cite{Achdou2010,achdou2012mean,andreev2017preconditioning,liu2021computational,yu2023computational}. These methods have been successful for many MFG systems, but they do not directly resolve the full feedback dependence of \(U(t,x,m)\) on arbitrary population distributions.

Machine learning methods have recently provided flexible tools for high-dimensional and parametric PDEs. Physics-informed neural networks approximate PDE solutions by minimizing differential residuals together with boundary or terminal losses \cite{Raissi2019}. Neural operators, including DeepONet and Fourier neural operators, learn nonlinear mappings between function spaces \cite{Lu2021,Li2020}, while physics-informed DeepONets further incorporate PDE constraints into operator learning for parametric PDEs \cite{WangWangPerdikaris2021}. More recently, adaptive coordinate transforms have been introduced into neural operators to reduce spatial misalignment in evolving PDE fields \cite{liu2026adaptive}. Recent work has also explored more structured designs neural network for scientific computing. Cai et al. studied permutation-trained networks with universal approximation guarantees \cite{cai2025neural}, and Chen et al. proposed the Sidecar framework for structure-preserving neural PDE solvers \cite{chen2025structure}. For problems involving probability measures, Pham and Warin introduced mean-field neural networks for learning mappings on Wasserstein spaces \cite{PhamWarin2023}. 

In the MFG and mean field control literature, neural methods have been developed to overcome the limitations of grid-based discretizations in high dimensions. Ruthotto et al. proposed a framework based on Lagrangian and Eulerian formulations \cite{Ruthotto2020}, while Lin et al. exploited the primal--dual structure of stochastic MFGs through alternating population and control networks \cite{lin2021alternating}. Fang et al. proposed a regenerative deep policy iteration method for high-dimensional finite horizon MFGs, avoiding the direct solution of the coupled HJB--FP system through particle-based measure updates and policy iteration \cite{fang2026deep}. Xu et al. developed an online interactive physics-informed diffusion-adversarial network for MFG systems \cite{Xu2026}. In parallel, Huang and Lai studied unsupervised operator learning for MFGs, aiming to map problem instances directly to their corresponding solutions \cite{Huang2025}.

These developments do not remove the main structural difficulty of master equations. The measure variable is not a finite-dimensional parameter, and the equation contains intrinsic derivatives with respect to this measure variable. In particular, a residual evaluation requires a consistent approximation of the value function \(U(t,x,m)\), the intrinsic derivative \(D_mU(t,x,m,y)\), and nonlocal terms such as
\[
\int_\Omega \operatorname{div}_y[D_mU(t,x,m,y)]\,\dd m(y),
\qquad
\int_\Omega D_mU(t,x,m,y)\cdot D_pH(y,D_xU(t,y,m))\,\dd m(y).
\]
Machine learning methods for finite-state master equations avoid part of this difficulty because the population distribution belongs to a finite-dimensional probability simplex \cite{CohenLauriereZell2024}. The relation between finite-state and continuous-state master equations has been studied through convergence analysis as the number of states tends to infinity \cite{BertucciCecchin2024}. There are also application-oriented numerical approaches for particular master equations, such as the semi-Lagrangian neural-network method for the Krusell--Smith model \cite{AchdouLasryLions2022}. Nevertheless, a general structure-preserving numerical framework for continuous-state master equations remains comparatively underdeveloped.

This study addresses this issue by proposing a measure-consistent operator learning method, abbreviated as MCOL, for infinite-dimensional master equations. The method is built around two principles. First, the population input is represented by an empirical measure and encoded by a symmetric pooling map. Second, the intrinsic derivative \(D_mU\) used in the master-equation residual is induced from the same neural representation as \(U\). The same empirical measure is
then used in the quadrature of the nonlocal residual terms. Thus the
approximation of \(U_\Theta\), the induced measure derivative \(D_mU_\Theta\),
and the empirical residual are tied to one common measure representation.

The main contributions are as follows.

1. A measure-consistent empirical operator learning framework for master equations: The probability measure is represented by empirical particles and encoded through a permutation-invariant pooling map.

2. An induced construction of the intrinsic derivative for the value function: The measure derivative is obtained from the value function approximation itself, yielding a structurally coupled representation of the value function and its derivative for the construction of the residual.

3. A physics-informed empirical residual formulation: The nonlocal terms
involving \(D_mU\), \(\operatorname{div}_yD_mU\), and related derivative
quantities are evaluated by empirical quadrature and automatic differentiation.

The remainder of the paper is organized as follows. Section~\ref{sec:problem_setting} formulates the master equation and recalls the notation for measure derivatives used throughout the paper. Section~\ref{sec:methodology} develops the proposed MCOL framework, including the measure-consistent representation, the induced intrinsic derivative, the empirical residual, the grouped training strategy, and the error decomposition used for evaluation. Section~\ref{sec:numerical_experiments} presents numerical experiments on 1D and 2D state-space problems, a characteristic relation test along an independently computed MFG trajectory, and a systemic-risk problem with common noise. Section~\ref{sec:conclusion} concludes the paper. The comparison baseline is described in Appendix~\ref{sec:baseline}.

\section{Problem statement}
\label{sec:problem_setting}

\subsection{Measure derivatives on $\PP(\Omega)$}
\label{subsec:measure_calculus}

We recall the measure derivative notation used throughout the paper, following the standard formulation in \cite{Cardaliaguet2019,ricciardi2022master,liu2025inverse}. Let $\Omega\subset\mathbb{R}^d$ be a bounded domain with sufficiently smooth boundary, and denote by $\mathcal P(\Omega)$ the space of Borel probability measures on $\Omega$.

\begin{definition}\label{def:first_variational_derivative}
	A function $U:\PP(\Omega)\to\R$ is said to be of class $C^1$ if there exists a continuous map
	\[
	K:\PP(\Omega)\times \Omega\to\R
	\]
	such that, for every $m_1,m_2\in\PP(\Omega)$,
	\begin{equation}\label{eq:Omega-C1-def-polished}
		\lim_{s\to 0^+}\frac{U\bigl(m_1+s(m_2-m_1)\bigr)-U(m_1)}{s}
		=
		\int_{\Omega} K(m_1,y)\,\dd(m_2-m_1)(y).
	\end{equation}
	The map $K$ is unique only up to an additive constant. We denote by $\frac{\delta U}{\delta m}$ the normalized representative satisfying \eqref{eq:Omega-C1-def-polished} together with
	\begin{equation*}
		\int_{\Omega} \frac{\delta U}{\delta m}(m,y)\,\dd m(y)=0.
	\end{equation*}
\end{definition}

\begin{definition}\label{def:intrinsic_derivative}
	Assume that $U$ is of class $C^1$ and that $\frac{\delta U}{\delta m}(m,\cdot)$ is $C^1$ in the second variable. The intrinsic derivative $D_mU$ is defined by
	\begin{equation*}
		D_mU(m,y):=D_y\!\left(\frac{\delta U}{\delta m}(m,y)\right),
		\qquad (m,y)\in \PP(\Omega)\times \Omega.
	\end{equation*}
\end{definition}

\subsection{master equation}
\label{subsec:master_equation}

We first present a master equation with homogeneous Neumann and no-flux boundary conditions, which fixes the notation and residual form used in the method:
\begin{equation}
	\label{eq:master_general_polished}
	\left\{
	\begin{aligned}
		&-\partial_t U(t,x,m)-\Delta_x U(t,x,m)+H\bigl(x,D_xU(t,x,m)\bigr)
		-\int_{\Omega} \operatorname{div}_y\!\bigl[D_mU(t,x,m,y)\bigr] \,\dd m(y) \\
		&\qquad\qquad
		+\int_{\Omega} D_mU(t,x,m,y)\cdot D_pH\bigl(y,D_xU(t,y,m)\bigr)\,\dd m(y)
		=F(x,m), \\
		&\hspace{8.2cm}(t,x,m)\in (0,T)\times \Omega\times \PP(\Omega), \\
		&D_xU(t,x,m)\cdot n(x)=0,
		\qquad (t,x,m)\in (0,T)\times \partial\Omega\times \PP(\Omega), \\
		&D_mU(t,x,m,y)\cdot n(y)=0,
		\qquad (t,x,m,y)\in (0,T)\times \Omega\times \PP(\Omega)\times \partial\Omega, \\
		&U(T,x,m)=G(x,m),
		\qquad (x,m)\in \Omega\times \PP(\Omega).
	\end{aligned}
	\right.
\end{equation}
Here \(n\) denotes the outward unit normal on \(\partial\Omega\). The value
function \(U(t,x,m)\) represents the optimal value for a representative agent
at time \(t\), with state \(x\), when the population distribution is \(m\).
The function \(F\) is the running cost, \(G\) is the terminal cost, and \(H\)
is the Hamiltonian. The notation \(D_pH\) denotes the derivative of \(H\) with
respect to its momentum variable. The Neumann condition in the state variable \(x\) and the no-flux condition in
the \(y\) variable in \(D_mU\) are natural when the state process is confined
to \(\Omega\). Well-posedness results for related bounded-domain
master equations under suitable regularity and monotonicity assumptions can be
found in \cite{ricciardi2022master}. Other boundary conditions can also be used, depending on the model and the numerical test. 

Although the master equation is formulated on \(\PP(\Omega)\), many mean-field
applications describe population distributions by densities. In the numerical
experiments, we therefore use probability measures with smooth positive
densities. Throughout the paper, \(m\in\PP(\Omega)\) denotes a probability
measure. If \(m\) is absolutely continuous with respect to the Lebesgue measure
on \(\Omega\), we write
\[
\dd m(y)=\rho(y)\,\dd y,
\qquad
\rho\ge 0,
\qquad
\int_\Omega \rho(y)\,\dd y=1.
\]
Thus, \(\rho\) denotes the density of \(m\) in the absolutely continuous case,
rather than an additional measure variable.

\section{Methodology}
\label{sec:methodology}

This section presents the MCOL for
approximating the solution map \((t,x,m)\mapsto U(t,x,m)\) of
\eqref{eq:master_general_polished}. The construction starts from the residual
requirements of the master equation: one must approximate \(U\), differentiate
it with respect to \((t,x)\), construct the intrinsic derivative
\(D_mU(t,x,m,y)\), and evaluate the nonlocal measure terms. The key point is
that the measure derivative is generated by differentiating the same measure-dependent representation that defines the value function. The resulting computational chain is
\begin{equation*}
	m
	\longmapsto
	m^N
	\longmapsto
	z_\eta(m^N)
	\longmapsto
	U_\Theta(t,x,m^N)
	\longmapsto
	D_mU_\Theta(t,x,m^N,y)
	\longmapsto
	\mathcal R_\Theta(t,x,m^N),
\end{equation*}
where \(m^N\) is the empirical approximation of the input measure,
\(z_\eta(m^N)\) is the pooled measure embedding, \(y\in\Omega\) is the variable of the intrinsic derivative, and
\(\mathcal R_\Theta\) denotes the sampled residual.

\subsection{Measure-consistent representation and the induced intrinsic derivative}
\label{subsec:single_network_dm}

For \(N\in\N\) particles \(\xi_i\in\Omega\), the empirical input is
\begin{equation}
	\label{eq:empirical_measure_polished}
	m^N=\frac1N\sum_{i=1}^N\delta_{\xi_i}.
\end{equation}
Integrals with respect to \(m^N\) are evaluated by particle averages. The
empirical measure is encoded by the symmetric feature average
\begin{equation}
	\label{eq:measure_encoding_polished}
	z_\eta(m^N)
	:=
	\frac1N\sum_{i=1}^N\phi_\eta(\xi_i)\in\R^h,
\end{equation}
where \(\phi_\eta:\Omega\to\R^h\) is the particle feature map and
\(h\) is the feature dimension. Motivated by the branch--trunk architecture of DeepONet \cite{Lu2021} and the structure-preserving framework \cite{chen2025structure}, we use a branch--trunk pairing to represent the dependence of \(U\) on the space-time
variables and on the measure argument. In contrast to a standard DeepONet, where the branch input is typically a
function represented by its values at fixed sensor points, the branch input here
is an empirical probability measure \(m^N\). It is therefore encoded through the
symmetric average \eqref{eq:measure_encoding_polished}, which provides a
measure-dependent differentiable structure from which the intrinsic derivative
\(D_mU_\Theta\) can be derived. Let \(\psi_\eta:\R^h\to\R^q\) be the branch
aggregation map, let \(T_\theta:[0,T]\times\Omega\to\R^q\) be the trunk map, and
let \(q\) be the branch--trunk feature dimension. Here \(\eta\) collects the
branch parameters, \(\theta\) denotes the trunk parameters, and \(b\in\R\) is a
scalar bias. The MCOL approximation is
\begin{equation}
	\label{eq:branch_trunk_ansatz}
	U_\Theta(t,x,m^N)
	=
	\left\langle
	T_\theta(t,x),\psi_\eta\bigl(z_\eta(m^N)\bigr)
	\right\rangle+b,
	\qquad
	\Theta=(\theta,\eta,b).
\end{equation}
This architecture is illustrated in Figure~\ref{fig:stream}(a). The measure variable is the input, whereas
\((t,x)\) is the evaluation coordinate.

\begin{figure}[htbp]
	\centering
	\includegraphics[width=0.95\textwidth]{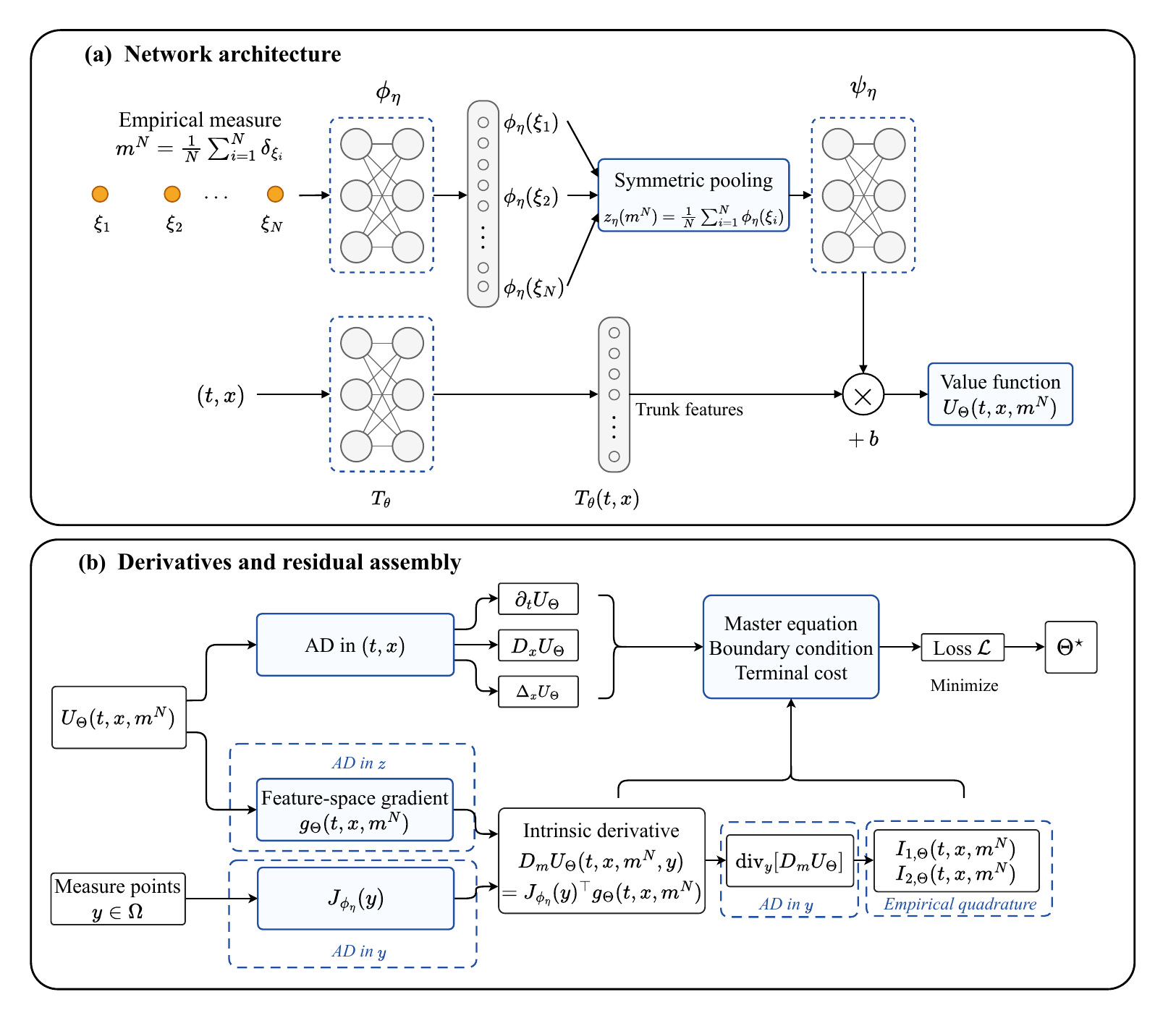}
	\caption{
		MCOL architecture with induced intrinsic derivatives and residual assembly.
	}
	\label{fig:stream}
\end{figure}

To derive the measure derivative, introduce the continuous-measure
extension of \eqref{eq:measure_encoding_polished},
\begin{equation*}
	z_\eta(m):=\int_\Omega \phi_\eta(y)\,\dd m(y),
\end{equation*}
and write the architecture as
\begin{equation*}
	U_\Theta(t,x,m)
	=
	\mathcal U_\Theta\bigl(t,x,z_\eta(m)\bigr),
	\qquad
	\mathcal U_\Theta(t,x,r)
	=
	\left\langle T_\theta(t,x),\psi_\eta(r)\right\rangle+b,
	\quad r\in\R^h.
\end{equation*}
The feature-space gradient is
\begin{equation*}
	g_\Theta(t,x,m)
	:=
	D_r\mathcal U_\Theta(t,x,r)\big|_{r=z_\eta(m)}
	\in\R^h,
\end{equation*}
where \(D_r\) denotes differentiation with respect to the feature variable
\(r\). For the branch--trunk form \eqref{eq:branch_trunk_ansatz},
\begin{equation*}
	g_\Theta(t,x,m)
	=
	J_{\psi_\eta}\bigl(z_\eta(m)\bigr)^\top T_\theta(t,x),
\end{equation*}
where \(J_{\psi_\eta}\) is the Jacobian of \(\psi_\eta\). The following result is
the structural point of the method: the intrinsic derivative used in the
residual is fully determined by the same architecture that approximates \(U\).

\begin{proposition}
	\label{prop:single_network_construction_rigorous}
	Assume that \(\phi_\eta\in C^1(\overline\Omega;\mathbb R^h)\) and that
	\(\mathcal U_\Theta\) is continuously differentiable with respect to its feature
	variable. For
	\[
	U_\Theta(t,x,m)
	=
	\mathcal U_\Theta\bigl(t,x,z_\eta(m)\bigr),
	\qquad
	z_\eta(m)=\int_\Omega \phi_\eta(y)\,\mathrm dm(y),
	\]
	define
	\[
	g_\Theta(t,x,m)
	:=
	D_r\mathcal U_\Theta(t,x,r)\big|_{r=z_\eta(m)} .
	\]
	Then \(U_\Theta(t,x,\cdot)\) admits a first variational derivative, and its
	intrinsic derivative is
	\begin{equation}
		\label{eq:intrinsic_derivative_rigorous}
		D_mU_\Theta(t,x,m,y)
		=
		J_{\phi_\eta}(y)^\top g_\Theta(t,x,m).
	\end{equation}
	In particular, for the branch--trunk representation
	\[
	U_\Theta(t,x,m)
	=
	\left\langle T_\theta(t,x),
	\psi_\eta\bigl(z_\eta(m)\bigr)
	\right\rangle+b,
	\]
	one has
	\begin{equation}
		\label{eq:intrinsic_derivative_explicit_rigorous}
		D_mU_\Theta(t,x,m,y)
		=
		J_{\phi_\eta}(y)^\top
		J_{\psi_\eta}\bigl(z_\eta(m)\bigr)^\top
		T_\theta(t,x).
	\end{equation}
\end{proposition}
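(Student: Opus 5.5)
The plan is to compute the directional derivative in Definition~\ref{def:first_variational_derivative} directly via the chain rule, read off a candidate kernel, normalize it, and then differentiate in $y$ as in Definition~\ref{def:intrinsic_derivative}.

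Fix $(t,x)$ and $m_1,m_2\in\PP(\Omega)$, and set $m_s:=m_1+s(m_2-m_1)$ for $s\in[0,1]$. Because $z_\eta$ is linear in the measure,
\[
z_\eta(m_s)=z_\eta(m_1)+s\int_\Omega \phi_\eta(y)\,\dd(m_2-m_1)(y).
\]
This is an affine curve in $\R^h$. Its velocity $v:=\int_\Omega\phi_\eta\,\dd(m_2-m_1)$ is finite, since $\phi_\eta$ is continuous on the compact set $\overline\Omega$. Since $\mathcal U_\Theta(t,x,\cdot)$ is $C^1$, the one-variable chain rule (or the mean value theorem) gives
\[
\lim_{s\to0^+}\frac{U_\Theta(t,x,m_s)-U_\Theta(t,x,m_1)}{s}=g_\Theta(t,x,m_1)\cdot v=\int_\Omega g_\Theta(t,x,m_1)\cdot\phi_\eta(y)\,\dd(m_2-m_1)(y).
\]
Hence $K(m,y):=g_\Theta(t,x,m)\cdot\phi_\eta(y)$ satisfies \eqref{eq:Omega-C1-def-polished}.

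It remains to check that $K$ is continuous on $\PP(\Omega)\times\Omega$, and this is the only point needing care. We equip $\PP(\Omega)$ with the weak topology, which is metrized by Wasserstein distance because $\Omega$ is bounded. Since $\phi_\eta$ is bounded and continuous, $m\mapsto z_\eta(m)$ is weakly continuous. Composing with the continuous map $D_r\mathcal U_\Theta(t,x,\cdot)$ shows that $g_\Theta(t,x,\cdot)$ is continuous. Multiplying by the continuous function $\phi_\eta(y)$ then gives joint continuity of $K$. So $U_\Theta(t,x,\cdot)$ is $C^1$.

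Next we normalize: $\frac{\delta U_\Theta}{\delta m}(m,y)=g_\Theta\cdot\bigl(\phi_\eta(y)-z_\eta(m)\bigr)$. The subtracted term is constant in $y$, so this still satisfies \eqref{eq:Omega-C1-def-polished} and has zero $m$-mean. Because $\phi_\eta\in C^1$, this function is $C^1$ in $y$. Differentiating in $y$ removes the constant and yields $D_y(g_\Theta\cdot\phi_\eta(y))=J_{\phi_\eta}(y)^\top g_\Theta(t,x,m)$, which is \eqref{eq:intrinsic_derivative_rigorous}.

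For the branch--trunk case we write $\mathcal U_\Theta(t,x,r)=\langle T_\theta(t,x),\psi_\eta(r)\rangle+b$, assuming $\psi_\eta\in C^1$. Then $D_r\mathcal U_\Theta=J_{\psi_\eta}(r)^\top T_\theta(t,x)$, and substituting $r=z_\eta(m)$ into \eqref{eq:intrinsic_derivative_rigorous} gives \eqref{eq:intrinsic_derivative_explicit_rigorous}.

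The main obstacle is therefore not the differentiation itself but the continuity requirement on $K$ in Definition~\ref{def:first_variational_derivative}. It is handled by the weak continuity of the linear map $m\mapsto\int\phi_\eta\,\dd m$, which in turn relies on the boundedness of $\Omega$ and the continuity of $\phi_\eta$ up to $\overline\Omega$.
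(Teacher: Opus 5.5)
Your proposal is correct and follows the same route as the paper's proof. Both use the linearity of $z_\eta$ and the chain rule to compute the one-sided derivative along $m_s$, read off the representative $\phi_\eta(y)\cdot g_\Theta(t,x,m)$, normalize by subtracting its $m$-average, and differentiate in $y$; both then specialize via $g_\Theta=J_{\psi_\eta}(z_\eta(m))^\top T_\theta(t,x)$. You also check that the kernel $K$ is jointly continuous, as Definition~\ref{def:first_variational_derivative} requires, and you assume $\psi_\eta\in C^1$ explicitly; the paper leaves both points implicit, so these additions only tighten the argument.
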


\begin{proof}
	Fix \((t,x)\) and let
	\[
	V_{t,x}(m):=U_\Theta(t,x,m).
	\]
	For \(m,m'\in\mathcal P(\Omega)\), set
	\[
	m_s=(1-s)m+sm',
	\qquad s\in[0,1].
	\]
	Since \(z_\eta\) is linear in \(m\),
	\[
	z_\eta(m_s)
	=
	z_\eta(m)
	+
	s\int_\Omega \phi_\eta(y)\,\mathrm d(m'-m)(y).
	\]
	Therefore, by the chain rule,
	\[
	\begin{aligned}
		\lim_{s\to0^+}
		\frac{V_{t,x}(m_s)-V_{t,x}(m)}{s}
		&=
		g_\Theta(t,x,m)
		\cdot
		\int_\Omega \phi_\eta(y)\,\mathrm d(m'-m)(y) \\
		&=
		\int_\Omega
		\phi_\eta(y)\cdot g_\Theta(t,x,m)\,
		\mathrm d(m'-m)(y).
	\end{aligned}
	\]
	Thus a representative of the first variational derivative is
	\[
	\phi_\eta(y)\cdot g_\Theta(t,x,m).
	\]
	A normalized representative is obtained by subtracting its \(m\)-average:
	\[
	\frac{\delta U_\Theta}{\delta m}(t,x,m,y)
	=
	\phi_\eta(y)\cdot g_\Theta(t,x,m)
	-
	\int_\Omega
	\phi_\eta(z)\cdot g_\Theta(t,x,m)\,\mathrm dm(z).
	\]
	Differentiating this expression with respect to \(y\), the normalization term
	drops out, and hence
	\[
	D_mU_\Theta(t,x,m,y)
	=
	J_{\phi_\eta}(y)^\top g_\Theta(t,x,m).
	\]
	For the branch--trunk form,
	\[
	g_\Theta(t,x,m)
	=
	J_{\psi_\eta}\bigl(z_\eta(m)\bigr)^\top T_\theta(t,x),
	\]
	which gives \eqref{eq:intrinsic_derivative_explicit_rigorous}.
\end{proof}

Proposition~\ref{prop:single_network_construction_rigorous} is the structural
basis of measure consistency. Once \(U_\Theta\) is fixed, the corresponding
\(D_mU_\Theta\) is fixed by \eqref{eq:intrinsic_derivative_rigorous}. All measure derivatives used below are obtained by the same architecture of the value function. In practice, \(g_\Theta\) is obtained by automatic differentiation with
respect to the pooled feature variable, and \(D_mU_\Theta\) is then obtained by
differentiating the particle feature map with respect to the independent variable \(y\).

The same construction also satisfies the empirical chain rule associated with
particle lifts of functions on probability measures. For the residual terms involving \(\operatorname{div}_yD_mU_\Theta\), we assume
that \(\phi_\eta\) is twice continuously differentiable. In the common-noise example of Subsection~\ref{subsec:systemic_risk_common_noise}, derivatives with respect to both the state and the measure, as well as second variations, are required. Accordingly, the trunk map, the aggregation map, and the particle feature map are assumed to be sufficiently smooth, which is ensured in the implementation by using smooth activation functions such as \(\tanh\). The construction defines \(D_mU_\Theta\) intrinsically from the network
representation. We next record a related particle chain rule, which clarifies
how this derivative is reflected in the lifted empirical measure representation,
although it is not used to compute \(D_mU_\Theta\) in the MCOL residual.

\begin{proposition}
	\label{prop:mcol_particle_chain_rule}
	Let \(m^N=N^{-1}\sum_{i=1}^N\delta_{\xi_i}\) and define the lifted function
	\[
	\widetilde U_\Theta(t,x,\xi_1,\ldots,\xi_N)
	:=
	U_\Theta(t,x,m^N).
	\]
	Under the assumptions of Proposition~\ref{prop:single_network_construction_rigorous},
	for each \(i=1,\ldots,N\),
	\begin{equation}
		\label{eq:mcol_particle_chain_rule}
		\nabla_{\xi_i}\widetilde U_\Theta(t,x,\xi_1,\ldots,\xi_N)
		=
		\frac1N D_mU_\Theta(t,x,m^N,\xi_i).
	\end{equation}
\end{proposition}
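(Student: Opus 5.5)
The plan is to compute the gradient of the lifted function directly from the pooled structure, and then recognize the result via formula \eqref{eq:intrinsic_derivative_rigorous} of Proposition~\ref{prop:single_network_construction_rigorous}.

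Fix \((t,x)\). By \eqref{eq:measure_encoding_polished}, the lift factors as
\[
\widetilde U_\Theta(t,x,\xi_1,\ldots,\xi_N)=\mathcal U_\Theta\Bigl(t,x,\tfrac1N\textstyle\sum_{j=1}^N\phi_\eta(\xi_j)\Bigr).
\]
Here \(\mathcal U_\Theta(t,x,\cdot)\) is \(C^1\) in the feature variable and \(\phi_\eta\in C^1(\overline\Omega;\R^h)\), so this is a composition of \(C^1\) maps in \((\xi_1,\ldots,\xi_N)\). We may therefore apply the ordinary finite-dimensional chain rule.

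Differentiating with respect to \(\xi_i\), only the \(j=i\) term of the average depends on \(\xi_i\). Its Jacobian with respect to \(\xi_i\) is \(\tfrac1N J_{\phi_\eta}(\xi_i)\), an \(h\times d\) matrix. The chain rule then gives
\[
\nabla_{\xi_i}\widetilde U_\Theta=\tfrac1N J_{\phi_\eta}(\xi_i)^\top D_r\mathcal U_\Theta(t,x,r)\big|_{r=z_\eta(m^N)}=\tfrac1N J_{\phi_\eta}(\xi_i)^\top g_\Theta(t,x,m^N).
\]
Here we use that the continuous extension \(z_\eta(m)=\int\phi_\eta\,\dd m\), evaluated at \(m=m^N\), coincides with the particle average \eqref{eq:measure_encoding_polished}. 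Since \(m^N\in\PP(\Omega)\), Proposition~\ref{prop:single_network_construction_rigorous} applies at \(m=m^N\) and \(y=\xi_i\). It identifies the right-hand side as \(\tfrac1N D_mU_\Theta(t,x,m^N,\xi_i)\), which is \eqref{eq:mcol_particle_chain_rule}.

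The only delicate point is the case where some \(\xi_i\) lies on \(\partial\Omega\). There, differentiability in \(\xi_i\) should be understood one-sidedly, or via a \(C^1\) extension of \(\phi_\eta\) to a neighborhood of \(\overline\Omega\), which the assumption \(\phi_\eta\in C^1(\overline\Omega)\) permits. Coincident particles \(\xi_i=\xi_j\) cause no difficulty, because the lift is defined on tuples rather than on measures.
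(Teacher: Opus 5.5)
Your proposal is correct and follows essentially the same route as the paper. Both apply the ordinary chain rule through the particle average $z_\eta(m^N)=\frac1N\sum_{j}\phi_\eta(\xi_j)$ to obtain $\frac1N J_{\phi_\eta}(\xi_i)^\top g_\Theta(t,x,m^N)$, and then identify this with $\frac1N D_mU_\Theta(t,x,m^N,\xi_i)$ via \eqref{eq:intrinsic_derivative_rigorous} at $m=m^N$, $y=\xi_i$; your remarks on boundary and coincident particles are harmless refinements the paper leaves implicit.
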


\begin{proof}
	Fix \((t,x)\) and view \(\widetilde U_\Theta\) as the particle lift of the
	measure functional \(V_{t,x}(m)=U_\Theta(t,x,m)\). By the empirical encoding
	\eqref{eq:measure_encoding_polished},
	\[
	z_\eta(m^N)=\frac1N\sum_{j=1}^N\phi_\eta(\xi_j).
	\]
	Therefore the ordinary chain rule in the particle coordinate \(\xi_i\) gives
	\[
	\nabla_{\xi_i}\widetilde U_\Theta
	=
	\frac1N J_{\phi_\eta}(\xi_i)^\top
	D_r\mathcal U_\Theta(t,x,r)\big|_{r=z_\eta(m^N)}.
	\]
	On the other hand, Proposition~\ref{prop:single_network_construction_rigorous},
	which was obtained from Definitions~\ref{def:first_variational_derivative}
	and~\ref{def:intrinsic_derivative}, gives
	\[
	D_mU_\Theta(t,x,m^N,\xi_i)
	=
	J_{\phi_\eta}(\xi_i)^\top
	D_r\mathcal U_\Theta(t,x,r)\big|_{r=z_\eta(m^N)}.
	\]
	Combining the last two identities yields
	\eqref{eq:mcol_particle_chain_rule}.
\end{proof}

In MCOL, the particles serve only as a finite
representation of the empirical measure, and the particle gradient relation
\eqref{eq:mcol_particle_chain_rule} is induced by the architecture itself.
By contrast, the baseline method that approximates \(U\) and \(D_mU\) by two
separate networks can enforce this relation only weakly, for instance through
an additional loss. This comparison is discussed in
\ref{sec:baseline}.

We stress that Proposition~\ref{prop:mcol_particle_chain_rule} is not used as
the computational definition of \(D_mU_\Theta\) in the PDE residual. Indeed,
\eqref{eq:mcol_particle_chain_rule} characterizes
\(D_mU_\Theta(t,x,m^N,\xi_i)\) only at the support points of the empirical
measure, through the gradient of the lifted finite-particle function. The PDE
residual and the boundary losses, however, require
\(D_mU_\Theta(t,x,m^N,y)\) and
\(\operatorname{div}_y[D_mU_\Theta](t,x,m^N,y)\) at general points
\(y\in\Omega\), including points on \(\partial\Omega\). Therefore, in the
implementation, \(D_mU_\Theta\) is evaluated from the intrinsic derivative
induced by the network, as given in
\eqref{eq:intrinsic_derivative_rigorous}--\eqref{eq:dm_ad_residual}.
Proposition~\ref{prop:mcol_particle_chain_rule} shows that the intrinsic derivative constructed above agrees with the particle lift when evaluated at the empirical support.

\subsection{Empirical representation of probability measures}
\label{subsec:continuous_to_empirical_measures}

The approximation \eqref{eq:branch_trunk_ansatz} is evaluated on empirical measures of
the form \eqref{eq:empirical_measure_polished}. When the available input is an
absolutely continuous probability measure
\[
\dd m(y)=\rho(y)\,\dd y,
\qquad
y\in\Omega\subset\mathbb R^d,
\]
we first convert the density \(\rho\) into particles
\(\{\xi_i\}_{i=1}^N\subset\Omega\), and then use
\[
m^N=\frac1N\sum_{i=1}^N\delta_{\xi_i}
\]
as the network input. The
network does not take the density values of \(\rho\) as input; it only uses the
particle cloud through the symmetric embedding \(z_\eta(m^N)\). Hence, any
density from which particles can be sampled or deterministically constructed can
be used to form an admissible MCOL input.

In the numerical experiments, smooth positive training densities are generated
from Gaussian random fields (GRF). Specifically, after sampling a mean-zero Gaussian
random field \(g\) with a squared-exponential covariance kernel, we define
\[
\widetilde\rho(y)=\exp(\alpha g(y)),
\qquad
\rho(y)=
\frac{\widetilde\rho(y)}
{\int_\Omega \widetilde\rho(s)\,\dd s},
\]
where \(\alpha\ge0\) controls the amplitude of the log-density fluctuation. This
GRF construction is used only to provide diverse training measures and is not a
restriction of the method.

For \(d\)-dimensional domains, we use low-discrepancy reference points and a
Rosenblatt-type inverse transform. For simplicity, consider
\(\Omega=[0,1]^d\). Let
\[
u_j=(u_{j,1},\ldots,u_{j,d})\in[0,1]^d,
\qquad j=1,\ldots,N,
\]
be Sobol points. These points are mapped to particles
\[
\xi_j=(\xi_{j,1},\ldots,\xi_{j,d})
\]
according to the target density \(\rho\). Define the marginal densities
\[
\rho_{1:k}(y_1,\ldots,y_k)
=
\int_{[0,1]^{d-k}}
\rho(y_1,\ldots,y_k,s_{k+1},\ldots,s_d)
\,\dd s_{k+1}\cdots\dd s_d,
\qquad k=1,\ldots,d,
\]
with \(\rho_{1:d}=\rho\). The first coordinate is obtained from the marginal
cumulative distribution function (CDF)
\[
F_1(r)=\int_0^r\rho_1(s)\,\dd s,
\qquad
\xi_{j,1}=F_1^{-1}(u_{j,1}).
\]
For \(k=2,\ldots,d\), after
\(\xi_{j,1},\ldots,\xi_{j,k-1}\) have been determined, we define the conditional
density
\[
\rho_{k|1:k-1}
(y_k\mid \xi_{j,1},\ldots,\xi_{j,k-1})
=
\frac{
	\rho_{1:k}(\xi_{j,1},\ldots,\xi_{j,k-1},y_k)
}{
	\rho_{1:k-1}(\xi_{j,1},\ldots,\xi_{j,k-1})
}.
\]
The corresponding conditional CDF is
\[
F_{k|1:k-1}
(r\mid \xi_{j,1},\ldots,\xi_{j,k-1})
=
\int_0^r
\rho_{k|1:k-1}
(s\mid \xi_{j,1},\ldots,\xi_{j,k-1})\,\dd s,
\]
and the \(k\)-th coordinate is set by
\[
\xi_{j,k}
=
F_{k|1:k-1}^{-1}
(u_{j,k}\mid \xi_{j,1},\ldots,\xi_{j,k-1}).
\]
In implementation, the marginal and conditional densities are evaluated on the
sampling grid by quadrature and interpolation, followed by normalization. When
\(d=1\), this construction reduces to the usual inverse cumulative distribution function (ICDF) discretization
\(\xi_j=F_\rho^{-1}((j-\frac12)/N)\).

Figure~\ref{fig:density_to_empirical_measure} illustrates the conversion from a
continuous density to its empirical measure. Compared with Monte Carlo
sampling, the construction provides a more regular coverage of
the reference probability space for a fixed particle number \(N\). After the
inverse transform, the particles still represent the target density, while the
empirical averages used in the nonlocal residual terms have reduced sampling
fluctuations. This point is important for MCOL, because the same empirical
measure \(m^N\) is used both in the network input and in the empirical quadrature
of the master-equation residual.

The same construction can also be applied to densities that are not generated from Gaussian random fields. This is tested in
Subsection~\ref{subsec:characteristic_relation}, where the trained model is
evaluated along a non-GRF density trajectory obtained from an independently
solved MFG system.

\begin{figure}[htbp]
	\centering
	\includegraphics[width=0.98\textwidth]{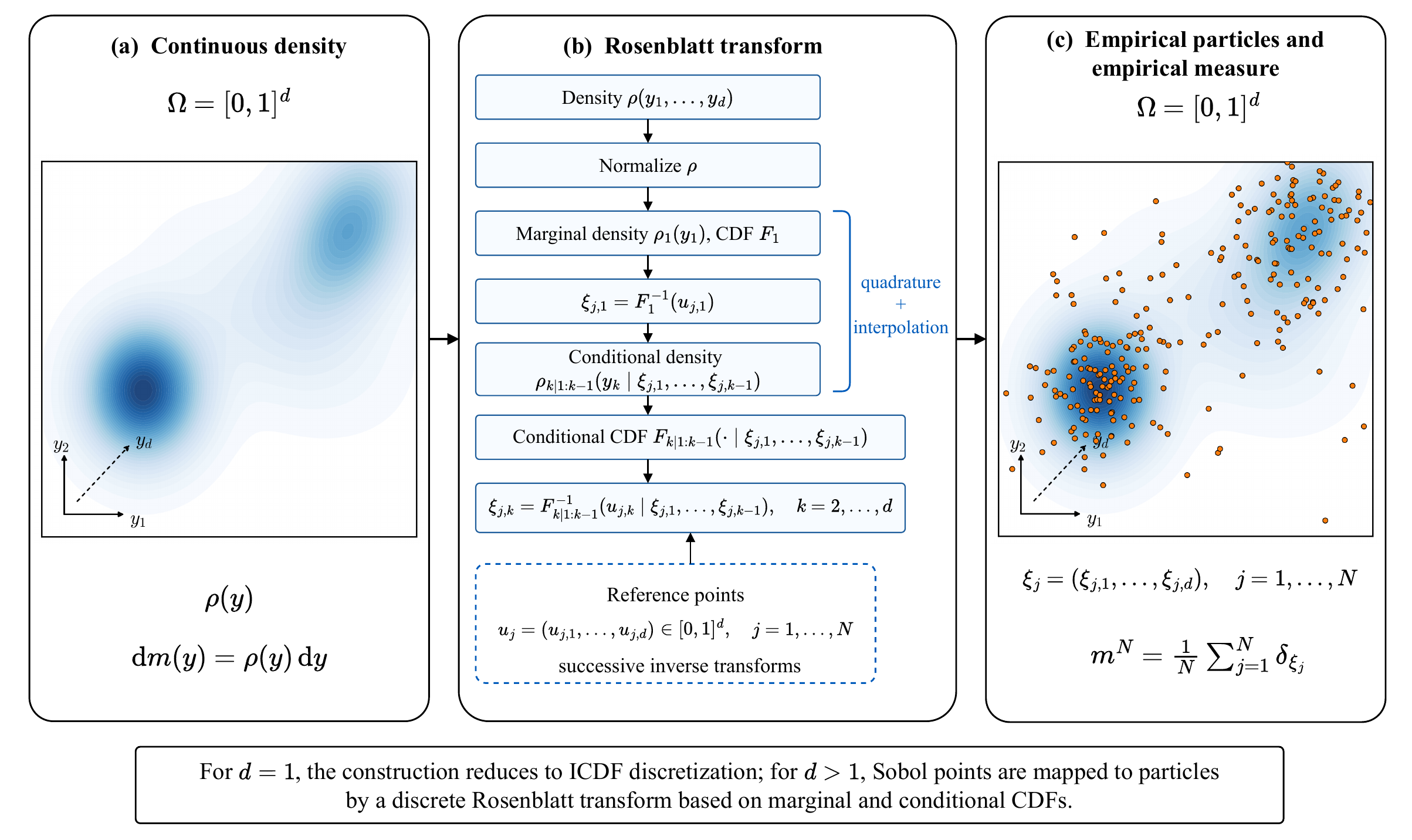}
	\caption{
		Construction of empirical measures from continuous densities.
	}
	\label{fig:density_to_empirical_measure}
\end{figure}

\subsection{Empirical residual and grouped training}
\label{subsec:residual_construction}

We now discretize the residual of \eqref{eq:master_general_polished}. Following the construction of empirical measure in Subsection~\ref{subsec:continuous_to_empirical_measures}, we first generate a set of 10000 measures. In each training mini-batch, \(M_{\mathrm{batch}}\) denotes the number of empirical measures used in one iteration. For these empirical measures, we write
\begin{equation*}
	m^{N,(k)}=\frac1N\sum_{i=1}^N\delta_{\xi_i^{(k)}},
	\qquad
	z_\eta(m^{N,(k)})
	=
	\frac1N\sum_{i=1}^N\phi_\eta(\xi_i^{(k)}),
	\qquad k=1,\ldots,M_{\mathrm{batch}} .
\end{equation*}
Thus, \(M_{\mathrm{batch}}\) controls how many different empirical measures are
sampled in each optimization step. A larger value improves the coverage of the
measure argument in the mini-batch loss, whereas a smaller value reduces the
memory and computational cost of residual evaluation. In the present problem,
the residual of master equation contains several state and measure derivatives,
empirical quadrature terms, and nested automatic differentiation (AD) \cite{baydin2018automatic}. We use a moderate
\(M_{\mathrm{batch}}\) so that each update still contains several independent
empirical measures, while keeping the graph size and GPU memory usage
manageable. This choice provides a practical balance between training
efficiency and accuracy.

All collocation points associated with the same empirical measure share the same
pooled embedding. The residual evaluation uses AD at three levels. Derivatives with respect to \((t,x)\)
give \(\partial_tU_\Theta\), \(D_xU_\Theta\), \(\Delta_xU_\Theta\), and, for
bounded-domain problems, the normal derivative
\(D_xU_\Theta\cdot n\) on \(\partial\Omega\). Derivatives with respect to the
pooled feature variable and the measure variable \(y\) give
\(D_mU_\Theta\) and \(\operatorname{div}_y[D_mU_\Theta]\). After the residual
and loss are assembled, AD is applied to the
full computational graph to compute \(\nabla_\Theta\mathcal L(\Theta)\).

For an evaluation point \((t,x)\), the measure gradient is computed as
\begin{equation*}
	g_\Theta(t,x,m^{N,(k)})
	=
	D_r\mathcal U_\Theta(t,x,r)\big|_{r=z_\eta(m^{N,(k)})}
	=
	\nabla_z\mathcal U_\Theta(t,x,z)\big|_{z=z_\eta(m^{N,(k)})},
\end{equation*}
where \(\nabla_z\) denotes the gradient with respect to the pooled feature
variable. Then, for any \(y\in\Omega\),
\begin{equation}
	\label{eq:dm_ad_residual}
	D_mU_\Theta(t,x,m^{N,(k)},y)
	=
	D_y\!\left[\phi_\eta(y)\cdot g_\Theta(t,x,m^{N,(k)})\right]
	=
	J_{\phi_\eta}(y)^\top g_\Theta(t,x,m^{N,(k)}).
\end{equation}
When evaluating \(D_mU_\Theta(t,x,m^N,\xi_i)\) in the empirical quadrature, the
variable \(y\) is treated as an independent copy of the particle location,
whereas the pooled embedding \(z_\eta(m^N)\) is held fixed with respect to the
\(y\)-differentiation. This distinction is important for computing the
intrinsic derivative rather than the full particle gradient of the lifted
function. The divergence term is obtained by one further differentiation:
\begin{equation*}
	\operatorname{div}_y[D_mU_\Theta](t,x,m^{N,(k)},y)
	=
	\sum_{\ell=1}^d
	\partial_{y_\ell}
	\left(D_mU_\Theta(t,x,m^{N,(k)},y)\right)_\ell .
\end{equation*}
If the particle coordinates are normalized before entering the network, this
normalization is treated as part of \(\phi_\eta\), and the corresponding scaling
factors are included automatically by the chain rule.

The collocation points are grouped by empirical measure. For each selected
empirical measure \(m^{N,(k)}\), we evaluate the PDE residual at scattered
points
\[
(t^{\mathrm{pde}}_{k,q},x^{\mathrm{pde}}_{k,q})
\in (0,T)\times\Omega ,
\]
the terminal condition at points \(x^T_{k,p}\in\Omega\), and, when boundary
conditions are imposed, the corresponding state-boundary residual and
measure-boundary residual. The scattered training points are generated from
Sobol sequences.

Let
\[
(t^{\mathrm{pde}}_{k,q},x^{\mathrm{pde}}_{k,q}),
\qquad q=1,\ldots,Q_{\mathrm{pde}},
\]
be the PDE collocation points associated with \(m^{N,(k)}\), where
\(Q_{\mathrm{pde}}\) is the number of PDE points per empirical measure. The
empirical counterparts of the two nonlocal terms in
\eqref{eq:master_general_polished} are
\begin{equation*}
	I_{1,\Theta}^{(k,q)}
	:=
	\frac1N\sum_{i=1}^N
	\operatorname{div}_y\!\left[
	D_mU_\Theta\bigl(
	t^{\mathrm{pde}}_{k,q},
	x^{\mathrm{pde}}_{k,q},
	m^{N,(k)},y
	\bigr)
	\right]_{y=\xi_i^{(k)}},
\end{equation*}
and
\begin{equation*}
	I_{2,\Theta}^{(k,q)}
	:=
	\frac1N\sum_{i=1}^N
	D_mU_\Theta\bigl(
	t^{\mathrm{pde}}_{k,q},
	x^{\mathrm{pde}}_{k,q},
	m^{N,(k)},
	\xi_i^{(k)}
	\bigr)
	\cdot
	D_pH\!\left(
	\xi_i^{(k)},
	D_xU_\Theta\bigl(
	t^{\mathrm{pde}}_{k,q},
	\xi_i^{(k)},
	m^{N,(k)}
	\bigr)
	\right).
\end{equation*}
Here \(I_{1,\Theta}^{(k,q)}\) approximates
\(\int_\Omega\operatorname{div}_y[D_mU_\Theta]\,\dd m\), while
\(I_{2,\Theta}^{(k,q)}\) approximates the Hamiltonian transport term involving
\(D_mU_\Theta\cdot D_pH\). The empirical quadrature uses the same particles that define the input measure
\(m^{N,(k)}\). Hence, the nonlocal measure terms can be evaluated without
introducing auxiliary integration points, or a
separate numerical integration scheme. This keeps the measure representation in
the network input and in the residual assembly consistent, while reducing the
cost and complexity of the implementation. With these empirical approximations, the PDE residual is
\begin{equation}
	\label{eq:residual_polished}
	\begin{aligned}
		\mathcal R^{\mathrm{pde}}_{\Theta,k,q}
		:={}&
		-\partial_tU_\Theta\bigl(
		t^{\mathrm{pde}}_{k,q},
		x^{\mathrm{pde}}_{k,q},
		m^{N,(k)}
		\bigr)
		-\Delta_xU_\Theta\bigl(
		t^{\mathrm{pde}}_{k,q},
		x^{\mathrm{pde}}_{k,q},
		m^{N,(k)}
		\bigr) \\
		&+
		H\!\left(
		x^{\mathrm{pde}}_{k,q},
		D_xU_\Theta\bigl(
		t^{\mathrm{pde}}_{k,q},
		x^{\mathrm{pde}}_{k,q},
		m^{N,(k)}
		\bigr)
		\right)
		-I_{1,\Theta}^{(k,q)}
		+I_{2,\Theta}^{(k,q)} \\
		&-
		F\bigl(
		x^{\mathrm{pde}}_{k,q},
		m^{N,(k)}
		\bigr).
	\end{aligned}
\end{equation}

The terminal residual uses terminal collocation points
\(x^T_{k,p}\), \(p=1,\ldots,Q_T\), where \(Q_T\) is the number of terminal points
per empirical measure:
\begin{equation*}
	\mathcal R^T_{\Theta,k,p}
	:=
	U_\Theta\bigl(T,x^T_{k,p},m^{N,(k)}\bigr)
	-
	G\bigl(x^T_{k,p},m^{N,(k)}\bigr).
\end{equation*}
For bounded-domain problems with Neumann conditions, we also use
state-boundary points
\((t^{\partial x}_{k,r},x^{\partial x}_{k,r})\),
\(r=1,\ldots,Q_{\partial x}\), and boundary points for the \(y\)-variable
\[
(t^{\partial m}_{k,s},x^{\partial m}_{k,s},y^{\partial m}_{k,s}),
\qquad s=1,\ldots,Q_{\partial m}.
\]
The corresponding residuals are
\begin{equation*}
	\mathcal R^{\partial x}_{\Theta,k,r}
	:=
	D_xU_\Theta\bigl(
	t^{\partial x}_{k,r},
	x^{\partial x}_{k,r},
	m^{N,(k)}
	\bigr)
	\cdot n\bigl(x^{\partial x}_{k,r}\bigr),
\end{equation*}
and
\begin{equation*}
	\mathcal R^{\partial m}_{\Theta,k,s}
	:=
	D_mU_\Theta\bigl(
	t^{\partial m}_{k,s},
	x^{\partial m}_{k,s},
	m^{N,(k)},
	y^{\partial m}_{k,s}
	\bigr)
	\cdot n\bigl(y^{\partial m}_{k,s}\bigr).
\end{equation*}

The sampled residuals define the following mini-batch loss components:
\begin{equation*}
	\begin{aligned}
		\mathcal L_{\mathrm{pde}}(\Theta)
		&:=
		\frac1{M_{\mathrm{batch}}Q_{\mathrm{pde}}}
		\sum_{k=1}^{M_{\mathrm{batch}}}
		\sum_{q=1}^{Q_{\mathrm{pde}}}
		\left|\mathcal R^{\mathrm{pde}}_{\Theta,k,q}\right|^2, \\
		\mathcal L_T(\Theta)
		&:=
		\frac1{M_{\mathrm{batch}}Q_T}
		\sum_{k=1}^{M_{\mathrm{batch}}}
		\sum_{p=1}^{Q_T}
		\left|\mathcal R^T_{\Theta,k,p}\right|^2, \\
		\mathcal L_{\partial x}(\Theta)
		&:=
		\frac1{M_{\mathrm{batch}}Q_{\partial x}}
		\sum_{k=1}^{M_{\mathrm{batch}}}
		\sum_{r=1}^{Q_{\partial x}}
		\left|\mathcal R^{\partial x}_{\Theta,k,r}\right|^2, \\
		\mathcal L_{\partial m}(\Theta)
		&:=
		\frac1{M_{\mathrm{batch}}Q_{\partial m}}
		\sum_{k=1}^{M_{\mathrm{batch}}}
		\sum_{s=1}^{Q_{\partial m}}
		\left|\mathcal R^{\partial m}_{\Theta,k,s}\right|^2 .
	\end{aligned}
\end{equation*}
The total objective is
\begin{equation*}
	\mathcal L(\Theta)
	:=
	\lambda_{\mathrm{pde}}\mathcal L_{\mathrm{pde}}(\Theta)
	+
	\lambda_T\mathcal L_T(\Theta)
	+
	\lambda_{\partial x}\mathcal L_{\partial x}(\Theta)
	+
	\lambda_{\partial m}\mathcal L_{\partial m}(\Theta),
\end{equation*}
where the four loss weights are set to
\(\lambda_{\mathrm{pde}}=\lambda_T=\lambda_{\partial x}
=\lambda_{\partial m}=1\) in all numerical experiments. The network is trained by minimizing \(\mathcal L(\Theta)\) with
respect to all parameters in \(\Theta=(\theta,\eta,b)\), and the resulting
optimized parameters are denoted by \(\Theta^\star\).

This grouped construction avoids rebuilding an independent pooled embedding for
every collocation point. Instead, several collocation points share the same
empirical-measure representation, while the loss still samples multiple
measures through the index \(k=1,\ldots,M_{\mathrm{batch}}\). Since the residual
requires repeated evaluations of \(U_\Theta\), \(D_xU_\Theta\),
\(D_mU_\Theta\), and \(\operatorname{div}_y[D_mU_\Theta]\), this grouping
reduces unnecessary computation and stabilizes the mini-batch estimate of the
training objective. The derivative construction and residual assembly are summarized in Figure~\ref{fig:stream}(b).

In the numerical experiments, the parameters are optimized in three stages:
\(4000\) Adam iterations with learning rate \(10^{-3}\), followed by \(1000\)
Adam cool-down iterations with learning rate \(10^{-4}\), and finally \(500\)
L-BFGS iterations with learning rate \(0.1\).

\begin{algorithm}[htbp]
	\caption{MCOL training algorithm}
	\label{alg:grouped_training}
	\begin{algorithmic}[1]
		\Require Training densities or density sampler; particle number \(N\);
		mini-batch size \(M_{\rm batch}\); collocation numbers
		\(Q_{\rm pde},Q_T,Q_{\partial x},Q_{\partial m}\);
		loss weights; optimizer schedule.
		\Ensure Trained MCOL parameters \(\Theta\).
		
		\State Initialize \(\Theta=(\theta,\eta,b)\).
		\State Generate empirical measures
		\[
		m^{N,(j)}
		=
		\frac1N\sum_{i=1}^{N}\delta_{\xi_i^{(j)}},
		\qquad j=1,\ldots,10000.
		\]
		
		\For{each optimization stage}
		\For{each training iteration}
		\State Draw a mini-batch
		\(\mathcal B\subset\{1,\ldots,10000\}\),
		with \(|\mathcal B|=M_{\rm batch}\).
		
		\State Generate collocation points for the interior, terminal,
		and boundary residuals.
		
		\State For each \(j\in\mathcal B\), evaluate
		\(U_\Theta(t,x,m^{N,(j)})\) using the branch--trunk representation.
		
		\State Compute the derivatives \(\partial_tU_\Theta, D_xU_\Theta, \Delta_xU_\Theta\) by automatic differentiation.
		
		\State Compute \(D_mU_\Theta\) and
		\(\operatorname{div}_y[D_mU_\Theta]\) from the MCOL architecture.
		
		\State Approximate the nonlocal terms by empirical quadrature:
		\[
		\begin{aligned}
			I_{1,\Theta}
			&=
			\frac1N\sum_{i=1}^{N}
			\operatorname{div}_y[D_mU_\Theta]
			(t,x,m^{N,(j)},\xi_i^{(j)}),
			\\
			I_{2,\Theta}
			&=
			\frac1N\sum_{i=1}^{N}
			D_mU_\Theta(t,x,m^{N,(j)},\xi_i^{(j)})
			\cdot
			D_pH\!\left(
			\xi_i^{(j)},
			D_xU_\Theta(t,\xi_i^{(j)},m^{N,(j)})
			\right).
		\end{aligned}
		\]
		
		\State Assemble the PDE, terminal, and boundary losses:
		\[
		\mathcal L(\Theta)
		=
		\lambda_{\rm pde}\mathcal L_{\rm pde}
		+
		\lambda_T\mathcal L_T
		+
		\lambda_{\partial x}\mathcal L_{\partial x}
		+
		\lambda_{\partial m}\mathcal L_{\partial m}.
		\]
		
		\State Update \(\Theta\) using the optimizer of the current stage.
		\EndFor
		\EndFor
		
		\State \Return \(\Theta\).
	\end{algorithmic}
\end{algorithm}

\subsection{Error components and evaluation protocol}
\label{subsec:error_decomposition}

The MCOL approximation is evaluated at empirical measures \(m^N\), whereas the
continuous reference solution, when available, is associated with an underlying
measure \(m\), usually induced by a density \(\rho\). To separate the neural
approximation error from the error introduced by the empirical representation of
the measure, we add and subtract \(U(t,x,m^N)\). For fixed \((t,x)\), this gives
\begin{equation}
	\label{eq:error_decomposition_method}
	\begin{aligned}
		\underbrace{U_\Theta(t,x,m^N)-U(t,x,m)}_{e_{\mathrm{tot}}(t,x;m^N)}
		&=
		\underbrace{U_\Theta(t,x,m^N)-U(t,x,m^N)}_{e_{\mathrm{net}}(t,x;m^N)}
		\\
		&\quad+
		\underbrace{U(t,x,m^N)-U(t,x,m)}_{e_{\mathrm{disc}}(t,x;m^N,m)} .
	\end{aligned}
\end{equation}
Here \(e_{\mathrm{net}}\) is the neural approximation error at the empirical
measure, while \(e_{\mathrm{disc}}\) is the measure-discretization error caused
by replacing \(m\) with \(m^N\).

This decomposition is consistent with the usual stability viewpoint for
functions defined on probability measures. If, for fixed \((t,x)\), the map
\(m\mapsto U(t,x,m)\) is Lipschitz continuous with respect to the Wasserstein
distance \(W_1\), then
\[
\left|U(t,x,m^N)-U(t,x,m)\right|
\le
C(t,x) W_1(m^N,m).
\]
Such an estimate follows, for instance, when \(U\) is differentiable with
respect to the measure variable and its intrinsic derivative is uniformly
bounded on the bounded domain \(\Omega\)
\cite{Cardaliaguet2019,CarmonaDelarue2018I}. Thus \(e_{\mathrm{disc}}\)
reflects the accuracy of the empirical approximation of the measure argument,
whereas \(e_{\mathrm{net}}\) reflects the approximation capacity and training
accuracy of the neural representation on the same empirical input.

By the triangle inequality,
\begin{equation*}
	\|e_{\mathrm{tot}}\|_{L^2}
	\le
	\|e_{\mathrm{net}}\|_{L^2}
	+
	\|e_{\mathrm{disc}}\|_{L^2}.
\end{equation*}
The corresponding relative errors are defined as
\begin{equation}
	\label{eq:relative_error_components}
	E_{\mathrm{tot}}
	:=
	\frac{\|U_\Theta(t,x,m^N)-U(t,x,m)\|_{L^2}}
	{\|U(t,x,m)\|_{L^2}},
	\qquad
	E_{\mathrm{net}}
	:=
	\frac{\|e_{\mathrm{net}}\|_{L^2}}
	{\|U(t,x,m)\|_{L^2}},
	\qquad
	E_{\mathrm{disc}}
	:=
	\frac{\|e_{\mathrm{disc}}\|_{L^2}}
	{\|U(t,x,m)\|_{L^2}}.
\end{equation}
All \(L^2\) norms are approximated on the prescribed evaluation grid.

In Subsections~\ref{subsec:1d_benchmark} and~\ref{subsec:2d}, the test data are
generated independently of the training set. We sample \(1000\) independent test
densities using the same GRF-based procedure as in the training stage, but with
independent random realizations. For each test density, the evaluation points
cover the whole space--time domain with mesh size \(0.01\) in the temporal and
spatial directions. The reported mean total relative error is
\begin{equation*}
	\overline{E}_{\mathrm{tot}}
	=
	\frac{1}{1000}
	\sum_{j=1}^{1000}
	\left(
	\frac{
		\sum_i
		\left|
		U_\Theta(t_i,x_i,m^{N,(j)})
		-
		U(t_i,x_i,m^{(j)})
		\right|^2
	}{
		\sum_i
		\left|
		U(t_i,x_i,m^{(j)})
		\right|^2
	}
	\right)^{1/2},
\end{equation*}
where the summation over \(i\) is taken over all selected space--time evaluation
points for the \(j\)-th test measure. The mean errors
\(\overline{E}_{\mathrm{net}}\) and \(\overline{E}_{\mathrm{disc}}\) are
computed in the same way, with the numerators replaced by the corresponding
error terms in \eqref{eq:relative_error_components}.

\section{Numerical experiments}
\label{sec:numerical_experiments}

In this section, we evaluate the proposed MCOL method on several master equations. The experiments include 1D and 2D state-space benchmarks, an MFG-trajectory validation on \(\mathbb T^1\), and a systemic-risk problem with common noise. These examples are used to assess the approximation of the value function \(U\), the induced intrinsic derivative \(D_mU\), and the effect of empirical discretization of the measure argument. 

For the network architecture, the particle feature map 
\(\phi_\eta:\Omega\to\R^h\) is parameterized by a fully connected network with
output dimension \(h=100\), hidden width \(100\), and depth \(3\). The
aggregation map \(\psi_\eta:\R^h\to\R^q\) has the same hidden width and depth,
with output dimension \(q=50\). The trunk network
\(T_\theta:[0,T]\times\Omega\to\R^q\) takes the space--time variable \((t,x)\)
as input and is parameterized by a fully connected network with hidden width
\(100\), depth \(3\), and output dimension \(q=50\). The activation function is chosen as \(\tanh\). All numerical experiments were trained on a NVIDIA A100 GPU with 80~GB memory.

Unless otherwise stated in the corresponding comparison experiments, the
collocation and particle settings are chosen as follows. In
Subsection~\ref{subsec:1d_benchmark}, each training iteration uses
\(Q_{\mathrm{pde}}=200\), \(Q_T=20\), and
\(Q_{\partial x}=Q_{\partial m}=20\) per empirical measure; the number of
empirical measures in each training batch is \(M_{\mathrm{batch}}=20\), and both
training and testing empirical measures are represented by \(N_{\mathrm{train}}=N_{\mathrm{test}}=64\) particles. Subsections~\ref{subsec:characteristic_relation} and \ref{subsec:systemic_risk_common_noise} use the same particle number
and mini-batch size, with no boundary residual during training. In
Subsection~\ref{subsec:2d}, each training iteration uses
\(Q_{\mathrm{pde}}=500\), \(Q_T=50\), and
\(Q_{\partial x}=50\), with \(M_{\mathrm{batch}}=100\); the
training empirical measures use \(N_{\mathrm{train}}=64\) particles, whereas the testing empirical measures use \(N_{\mathrm{test}}=256\) particles.

\subsection{A one-dimensional state-space master equation}
\label{subsec:1d_benchmark}

We first consider a one-dimensional problem in \(
(0,1)\times [0,1]\times \PP([0,1]).\) This problem is obtained by specializing the master equation
\eqref{eq:master_general_polished} to the one-dimensional domain
\(\Omega=[0,1]\), with the homogeneous Neumann condition in the state variable
and the compatible no-flux condition in the measure variable \(y\) as described
in Section~\ref{subsec:master_equation}. The Hamiltonian is chosen as \(
H(x,p)=|p|^2/2.
\) The running cost \(F\) and terminal cost \(G\) are manufactured so that the exact solution is
\begin{equation*}
	U(t,x,m)
	=
	(T-t)+\cos(\pi x)\int_0^1 \cos(\pi z)\,m(dz).
\end{equation*}

We first validate the effectiveness of MCOL by comparing it with the PINN baseline on both in-distribution data (IDD) and out-of-distribution (OOD) cases. The comparison is conducted under the same training conditions and with comparable numbers of network parameters. Figure~\ref{fig:grf_density_particles_vertical} displays representative GRF densities \(\rho\) and the corresponding empirical particles used to construct \(m^N\) with \(N=64\). The IDD case uses the length-scale parameter \(l=0.2\). The OOD case uses a smaller length-scale parameter \(l=0.02\), leading to sharper peaks and stronger particle clustering. This setting tests whether the learned operator remains stable under changes in the smoothness of the input measure.

\begin{figure}[htbp]
	\centering
	
	\begin{subfigure}[t]{0.48\textwidth}
		\centering
		\includegraphics[width=\textwidth]{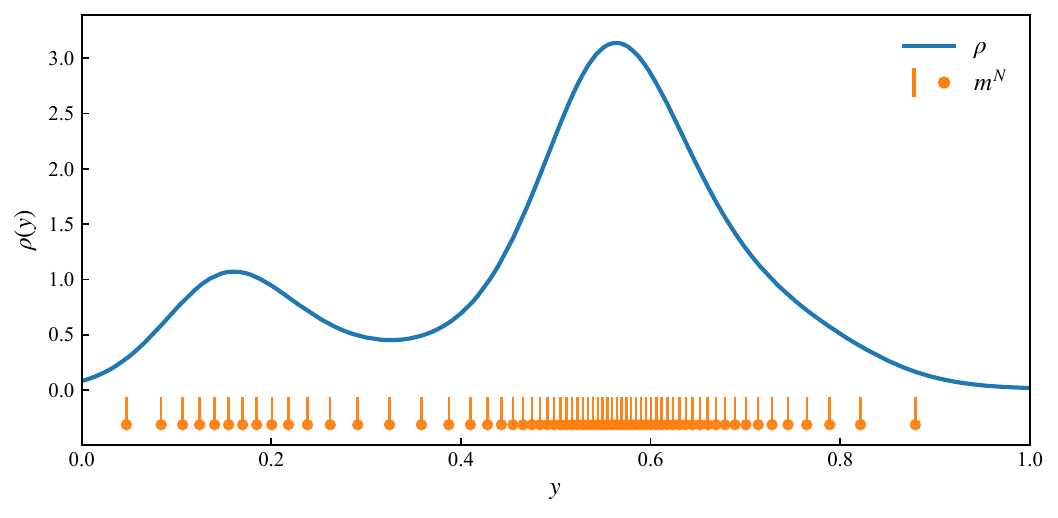}
		\caption{IDD.}
		\label{fig:saved_grf_density_particles}
	\end{subfigure}
	\hfill
	\begin{subfigure}[t]{0.48\textwidth}
		\centering
		\includegraphics[width=\textwidth]{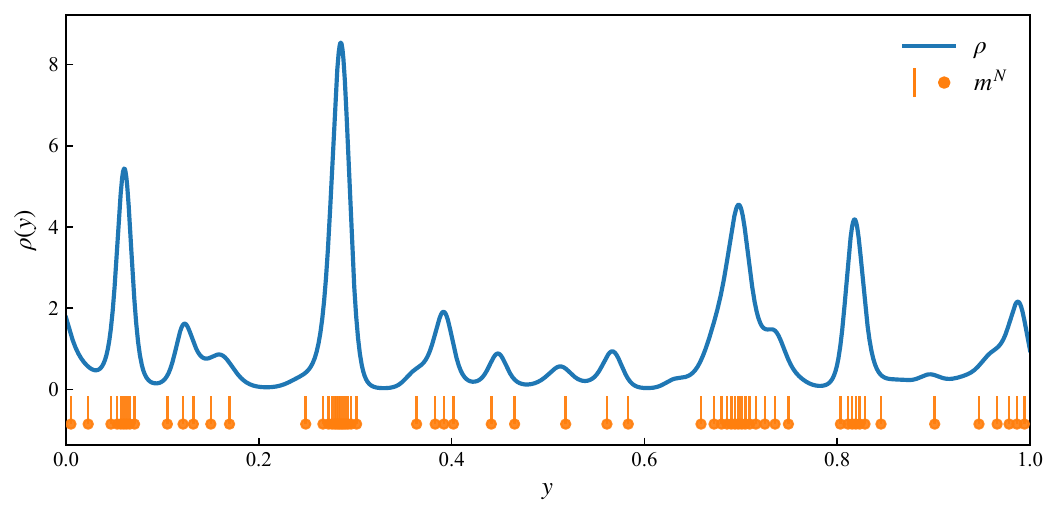}
		\caption{OOD.}
		\label{fig:ood_saved_grf_density_particles}
	\end{subfigure}
	
	\caption{Examples of GRF densities $\rho$ and the associated empirical particles defining $m^N$.}
	\label{fig:grf_density_particles_vertical}
\end{figure}

Figures~\ref{fig:three_way_error_heatmaps_columns} and
\ref{fig:three_way_error_heatmaps_columns1} compare the three pointwise error
components defined in \eqref{eq:error_decomposition_method} for the PINN baseline
and the proposed MCOL method, using the empirical measures shown in
Figure~\ref{fig:grf_density_particles_vertical} as inputs. In both the IDD and
OOD cases, the top rows show that the PINN baseline has
total errors \(e_{\mathrm{tot}}\) dominated by the network-induced component
\(e_{\mathrm{net}}\), while the empirical discretization error \(e_{\mathrm{disc}}\)
is several orders of magnitude smaller. The bottom rows show that MCOL
substantially reduces both \(e_{\mathrm{tot}}\) and \(e_{\mathrm{net}}\) over the
whole space--time domain. This improvement is already visible in the IDD case
and becomes more pronounced in the OOD case, where the GRF density is generated
with a much smaller length-scale parameter and contains sharper local
structures. Although the OOD setting increases the difficulty of the
approximation, MCOL keeps the network error at a much lower level than the
baseline. These results indicate that the
measure-consistent construction of the intrinsic derivative improves the
stability of the learned operator with respect to changes in the regularity of
the input distribution.

\begin{figure}[htbp]
	\centering
	\begin{subfigure}[t]{0.32\textwidth}
		\centering
		\includegraphics[width=\textwidth]{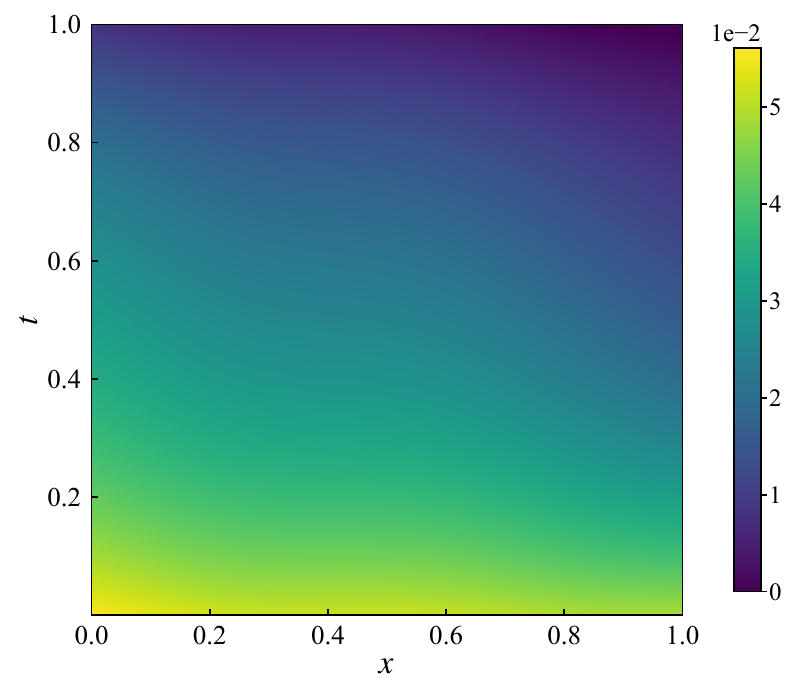}
		
		\vspace{0.5em}
		
		\includegraphics[width=\textwidth]{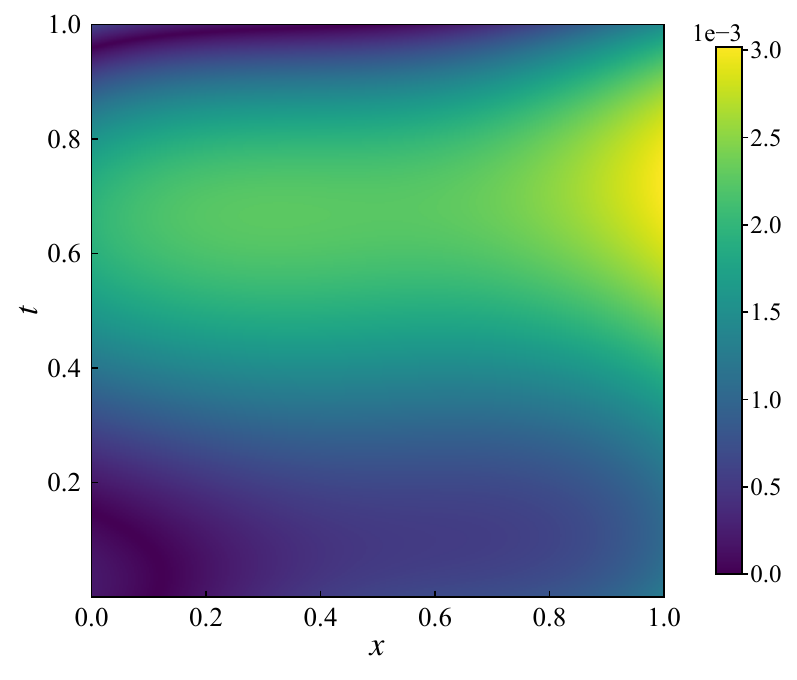}
		
		\caption{$|e_{\mathrm{tot}}|$.}
		\label{fig:err_net_cont_column}
	\end{subfigure}
	\hfill
	\begin{subfigure}[t]{0.32\textwidth}
		\centering
		\includegraphics[width=\textwidth]{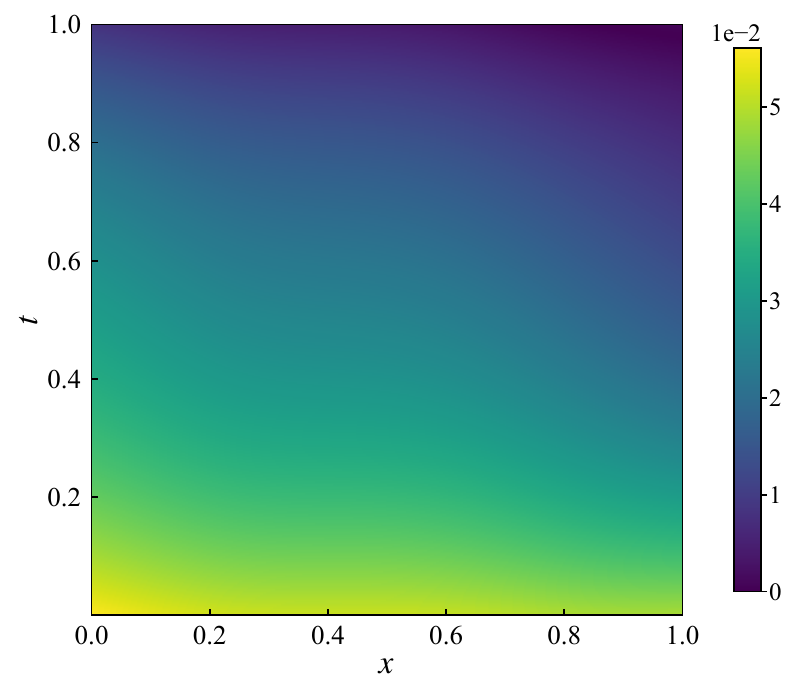}
		
		\vspace{0.5em}
		
		\includegraphics[width=\textwidth]{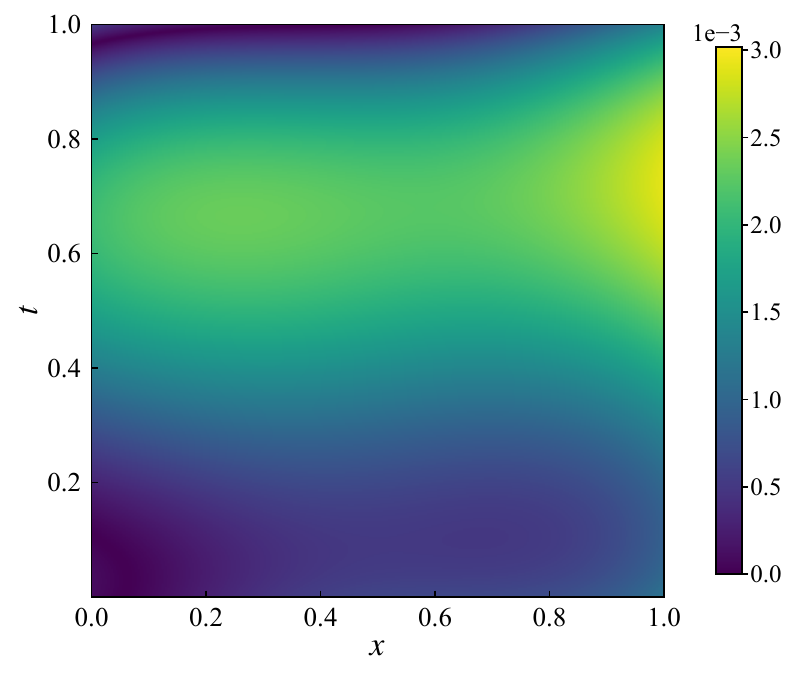}
		
		\caption{$|e_{\mathrm{net}}|$.}
		\label{fig:err_net_emp_column}
	\end{subfigure}
	\hfill
	\begin{subfigure}[t]{0.32\textwidth}
		\centering
		\includegraphics[width=\textwidth]{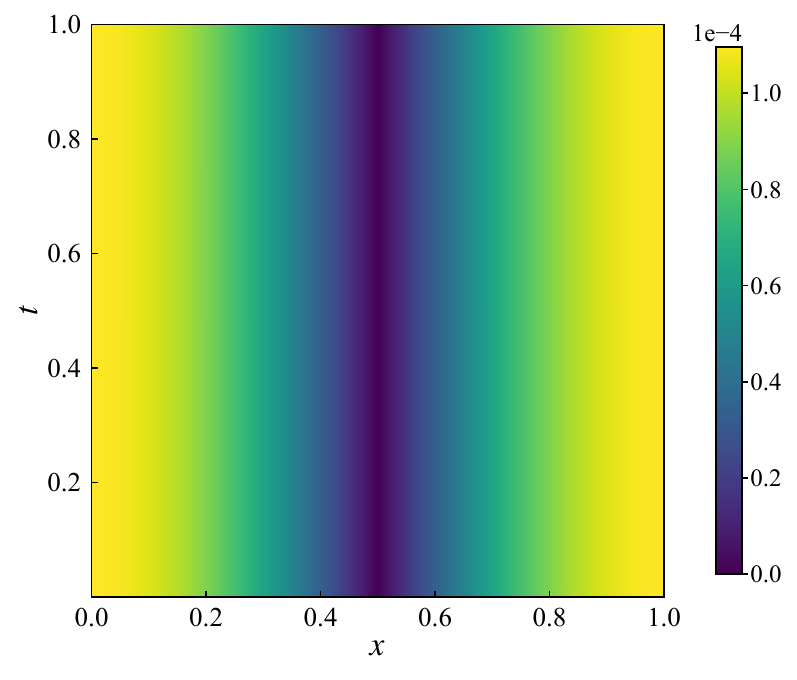}
		
		\vspace{0.5em}
		
		\includegraphics[width=\textwidth]{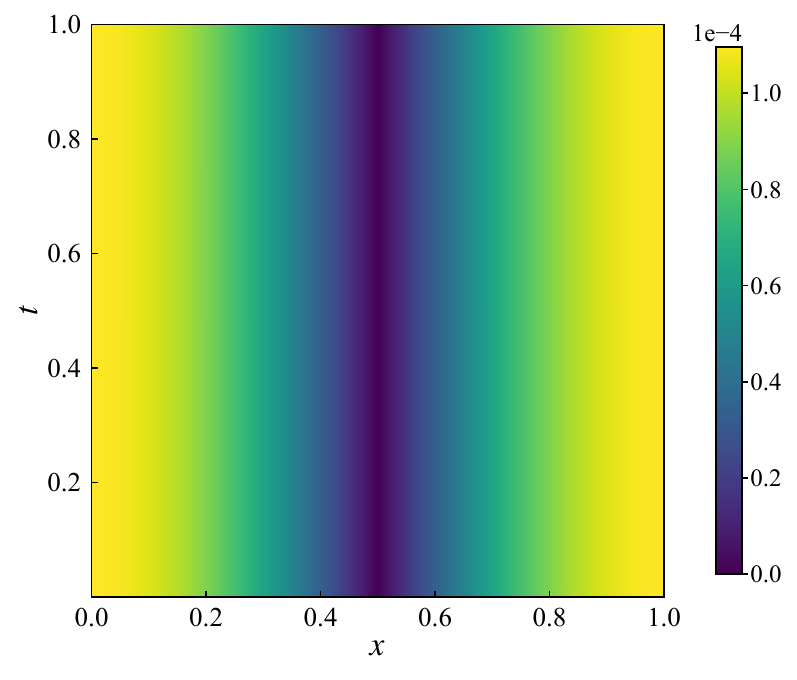}
		
		\caption{$|e_{\mathrm{disc}}|$.}
		\label{fig:err_emp_cont_column}
	\end{subfigure}
	
	\caption{
		IDD pointwise error comparison.
		Top: PINN baseline; bottom: MCOL.
	}
	\label{fig:three_way_error_heatmaps_columns}
\end{figure}

\begin{figure}[htbp]
	\centering
	\begin{subfigure}[t]{0.32\textwidth}
		\centering
		\includegraphics[width=\textwidth]{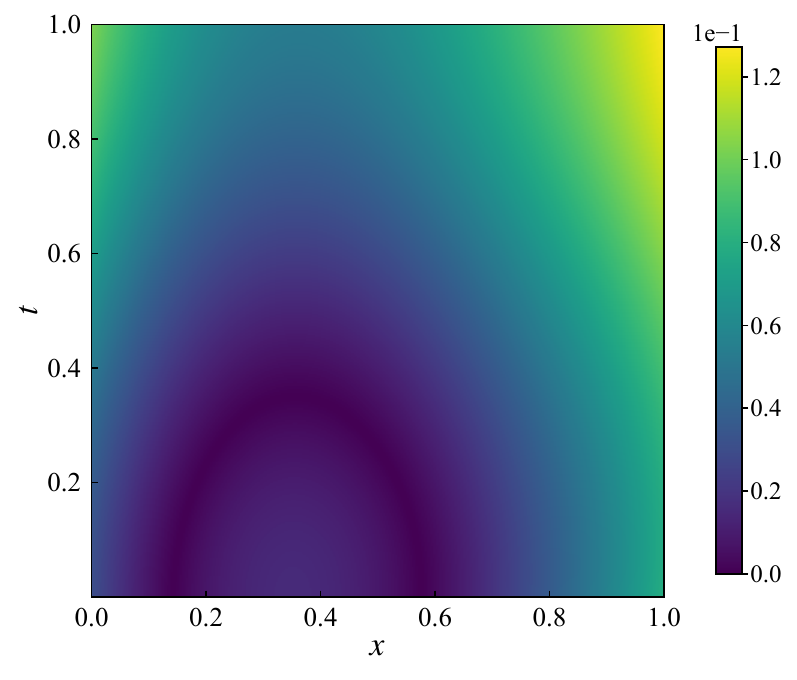}
		
		\vspace{0.5em}
		
		\includegraphics[width=\textwidth]{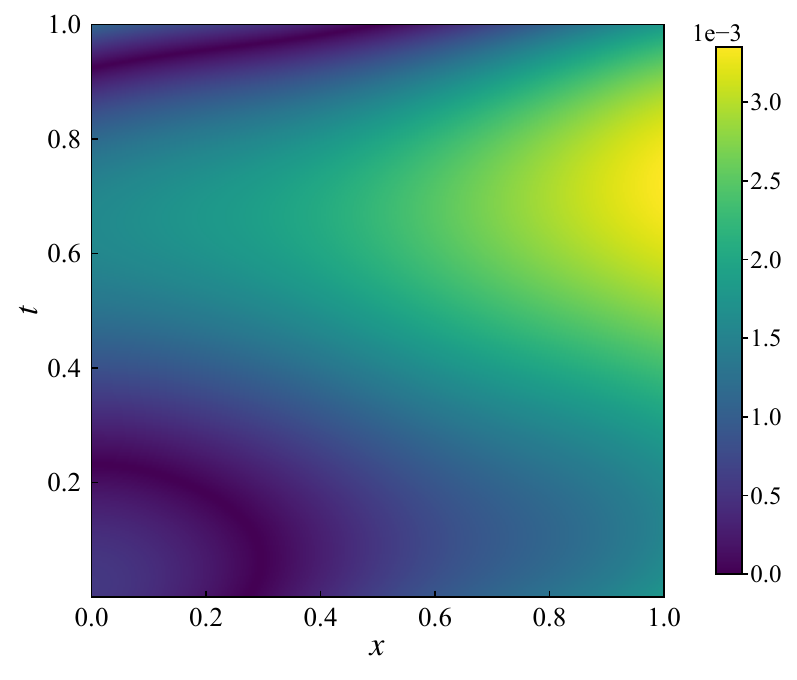}
		
		\caption{$|e_{\mathrm{tot}}|$.}
		\label{fig:err_net_cont_column1}
	\end{subfigure}
	\hfill
	\begin{subfigure}[t]{0.32\textwidth}
		\centering
		\includegraphics[width=\textwidth]{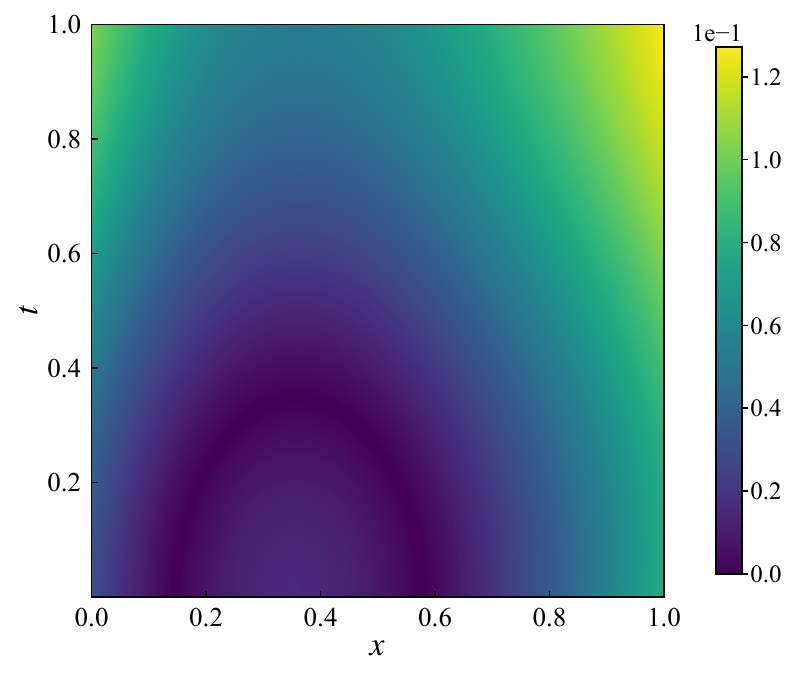}
		
		\vspace{0.5em}
		
		\includegraphics[width=\textwidth]{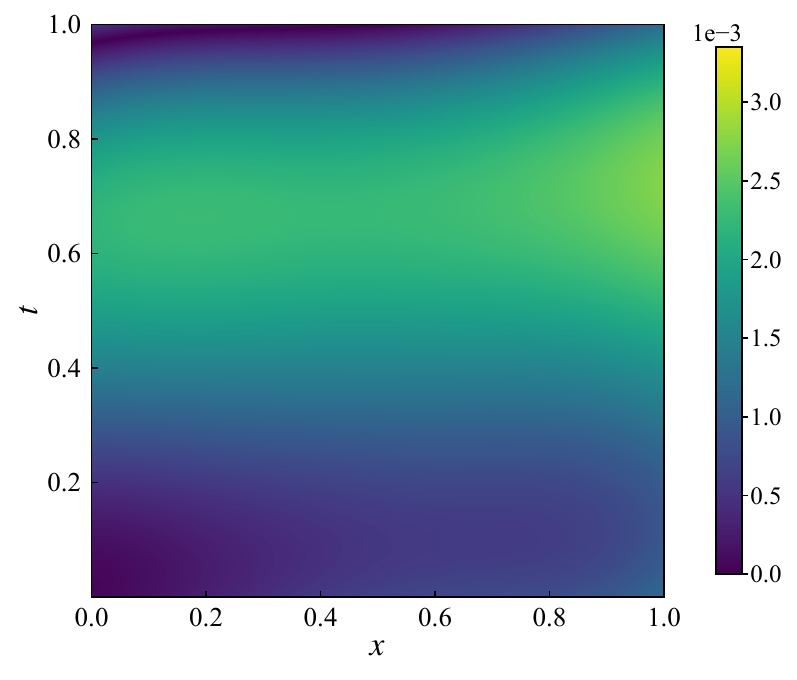}
		
		\caption{$|e_{\mathrm{net}}|$.}
		\label{fig:err_net_emp_column1}
	\end{subfigure}
	\hfill
	\begin{subfigure}[t]{0.31\textwidth}
		\centering
		\includegraphics[width=\textwidth]{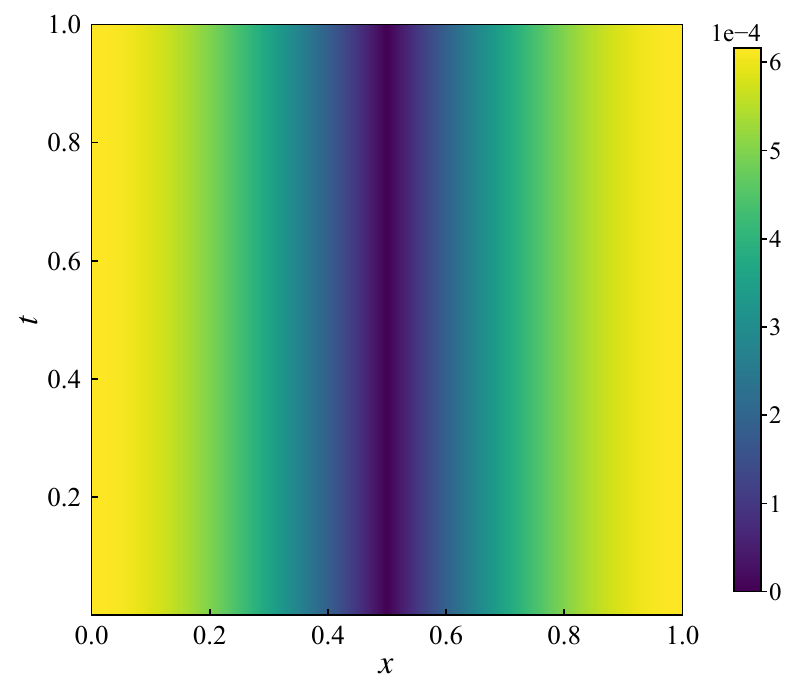}
		
		\vspace{0.5em}
		
		\includegraphics[width=\textwidth]{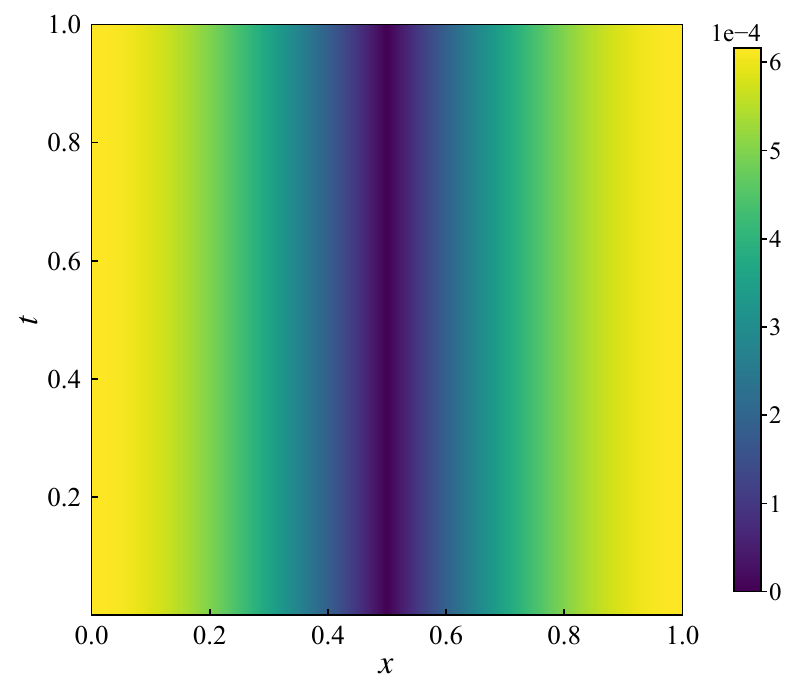}
		
		\caption{$|e_{\mathrm{disc}}|$.}
		\label{fig:err_emp_cont_column1}
	\end{subfigure}
	
	\caption{
		OOD pointwise error comparison.
		Top: PINN baseline; bottom: MCOL method.
	}
	\label{fig:three_way_error_heatmaps_columns1}
\end{figure}

To evaluate the performance over multiple input measures, we report the mean \(L^2\) relative errors and the corresponding standard deviations over 1000
independently generated test measures. Figure~\ref{fig:benchmark_error_plots}
shows the dependence of the three error components on the particle number \(N\).
For both IDD and OOD test measures generated from GRF densities,
\(E_{\mathrm{disc}}\) decreases as \(N\) increases, confirming that this term
mainly measures the empirical discretization error in the measure argument. The
decay is faster in the IDD case, where the densities are smoother and can be
resolved more efficiently by empirical particles. By contrast, the OOD densities
contain sharper local structures, which leads to a slower decay of
\(E_{\mathrm{disc}}\). The total error \(E_{\mathrm{tot}}\) decreases for small
\(N\), but then saturates near the level of \(E_{\mathrm{net}}\). This behavior
is consistent with the decomposition in
\eqref{eq:error_decomposition_method}: once the empirical discretization error
becomes sufficiently small, the total error is dominated by the approximation
error of the trained neural operator. The same qualitative trend is observed in
the OOD case, with slightly larger errors due to the reduced regularity of the
input densities. These results indicate that increasing the particle resolution
improves accuracy up to the network-approximation limit.

\begin{figure}[htbp]
	\centering
	
	\begin{subfigure}[t]{0.48\textwidth}
		\centering
		\includegraphics[width=\textwidth]{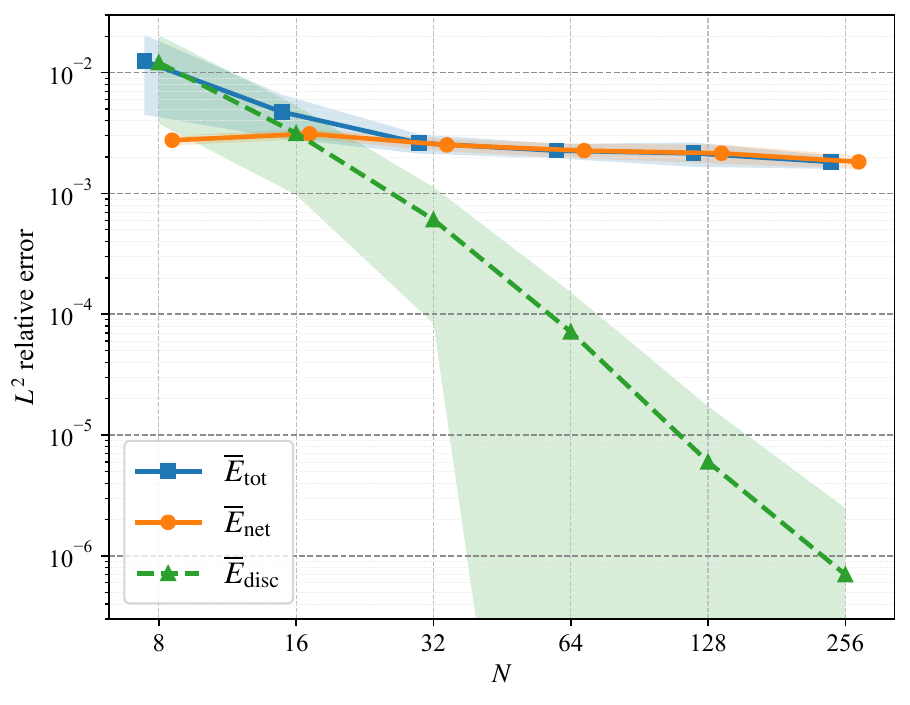}
		\caption{IDD.}
		\label{fig:idd_error_plot}
	\end{subfigure}
	\hfill
	\begin{subfigure}[t]{0.48\textwidth}
		\centering
		\includegraphics[width=\textwidth]{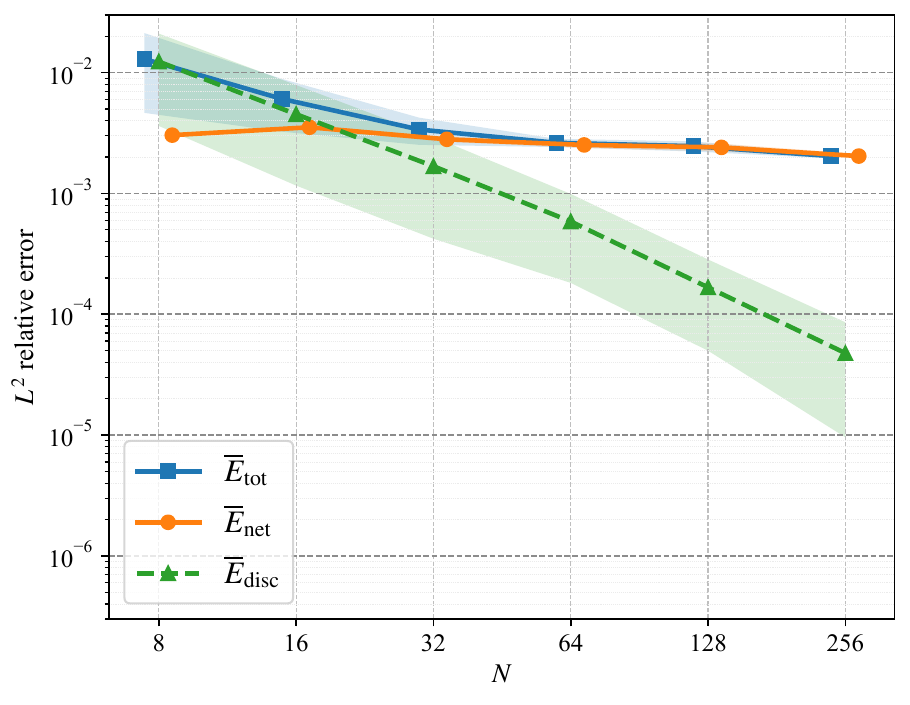}
		\caption{OOD.}
		\label{fig:ood_error_plot}
	\end{subfigure}
	
	\caption{
		Mean $L^2$ relative errors and corresponding standard deviations as functions of the particle number $N$.
	}
	\label{fig:benchmark_error_plots}
\end{figure}

To further assess the generalization of the trained network with respect to the
particle resolution, Figure~\ref{fig:dense_Ntest_heatmaps} reports the mean
network \(L^2\) relative error for different training and testing particle
numbers. For both IDD and OOD test densities, the error is mainly governed by
the training particle number \(N_{\mathrm{train}}\), whereas its dependence on
the testing particle number \(N_{\mathrm{test}}\) is relatively weak. This
behavior is desirable for an operator learning method on empirical measures,
since a stable measure representation learned from empirical inputs allows the
resulting operator to be evaluated at different particle resolutions without a
significant loss of accuracy. Increasing \(N_{\mathrm{train}}\) generally
improves the accuracy, because more training particles provide a more faithful
discretization of the measure-dependent terms in the residual. These results
indicate that the proposed MCOL architecture is stable with respect to changes in the particle resolution. Consequently, the network can be trained with a moderate number of particles to reduce computational cost, and then evaluated with a larger number of particles to further reduce the empirical discretization error.

\begin{figure}[htbp]
	\centering
	
	\begin{subfigure}[t]{0.47\textwidth}
		\centering
		\includegraphics[width=\textwidth]{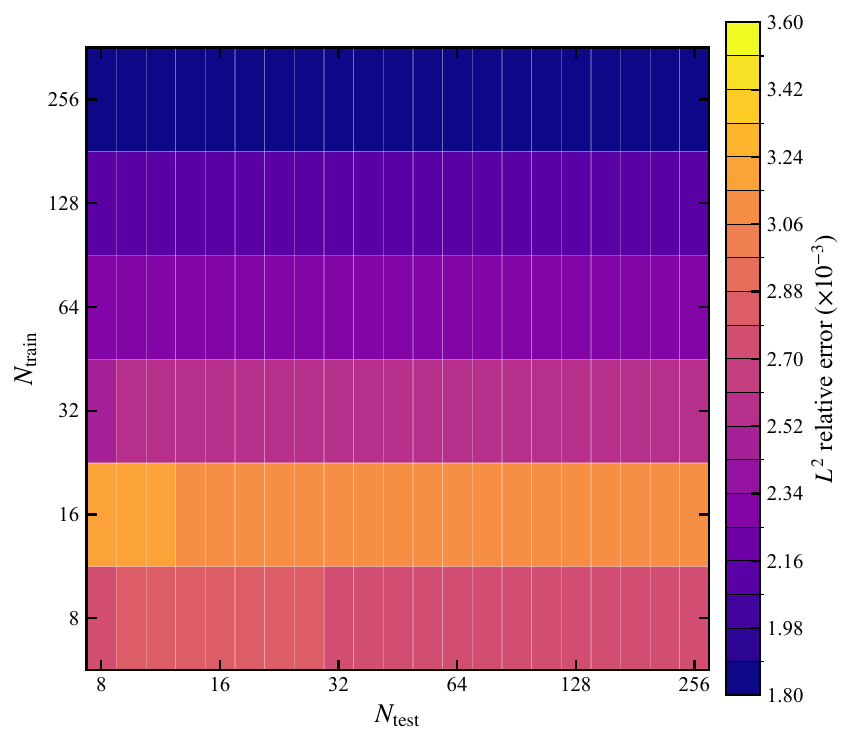}
		\caption{IDD.}
		\label{fig:idd_mean_dense_Ntest}
	\end{subfigure}
	\hfill
	\begin{subfigure}[t]{0.47\textwidth}
		\centering
		\includegraphics[width=\textwidth]{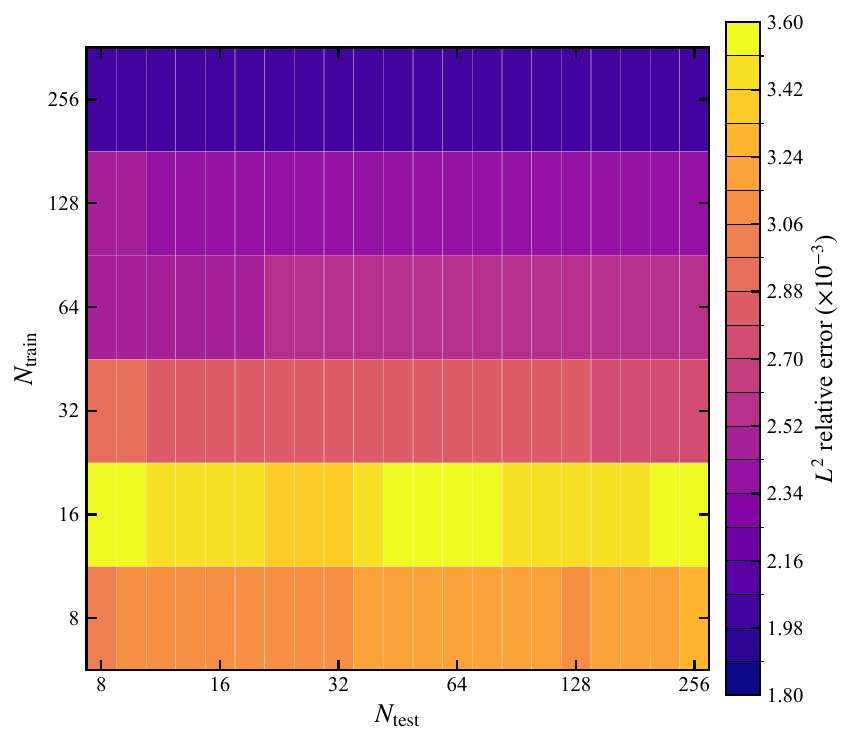}
		\caption{OOD.}
		\label{fig:ood_mean_dense_Ntest}
	\end{subfigure}
	
	\caption{
		Mean network $L^2$ relative error versus training and testing particle numbers.
	}
	\label{fig:dense_Ntest_heatmaps}
\end{figure}

\subsection{A two-dimensional state-space master equation}
\label{subsec:2d}

Let \(\Omega=[0,1]^2\). We consider the two-dimensional master equation in \((0,1)\times\Omega\times\PP(\Omega).\) The Hamiltonian is chosen as \(H(x,p)=|p|^2/2.\) 
In this example, exact-value Dirichlet boundary supervision is imposed on
\((0,T)\times\partial\Omega\times\PP(\Omega)\). The running cost \(F\), terminal cost \(G\), and boundary values are prescribed by the exact solution:
\begin{equation*}
	U(t,x,m)
	=
	t+|x|^2
	\left[
	\int_{\Omega}|z|^2\,dm(z)
	+
	\left(\int_{\Omega}z_1z_2\,dm(z)\right)^2
	+
	\sin\left(
	\int_{\Omega}(z_1^2z_2+z_1z_2^2)\,dm(z)
	\right)
	\right].
\end{equation*}

Tables~\ref{tab:2d_measure_pool_training} and
\ref{tab:2d_fixed_measure_training} compare two strategies for selecting
training measures in the two-dimensional experiment. In both cases, the
networks are trained with \(N_{\mathrm{train}}=64\) particles per empirical
measure and evaluated with \(N_{\mathrm{test}}=64\) and
\(N_{\mathrm{test}}=256\). In Table~\ref{tab:2d_measure_pool_training}, the
empirical measures are selected according to the grouped training procedure in
Subsection~\ref{subsec:residual_construction}: \(10\,000\) empirical measures
are generated in advance, and \(M_{\mathrm{batch}}\) of them are selected at
each iteration. In Table~\ref{tab:2d_fixed_measure_training}, only
\(M_{\mathrm{batch}}\) empirical measures are generated and reused throughout
training. The strategy in Table~\ref{tab:2d_measure_pool_training} gives
smaller errors for all tested values of \(M_{\mathrm{batch}}\), especially for
small and moderate batches. For example, when \(M_{\mathrm{batch}}=20\), it gives
\(\overline E_{\rm net}\approx 1.39\times10^{-2}\), compared with
\(4.39\times10^{-2}\) for the fixed-measure strategy; when
\(M_{\mathrm{batch}}=100\), \(\overline E_{\rm net}\) decreases to about
\(5.25\times10^{-3}\), while the fixed-measure strategy remains around
\(2.24\times10^{-2}\). Since the training times are comparable, this improvement
mainly reflects the greater diversity of training measures. The discretization
error \(E_{\rm disc}\) is not listed because it depends on
\(N_{\mathrm{test}}\), rather than on the sampling strategy or
\(M_{\mathrm{batch}}\). In this experiment,
\(\overline E_{\rm disc}=4.416\times10^{-3}\) for
\(N_{\mathrm{test}}=64\) and \(9.105\times10^{-4}\) for
\(N_{\mathrm{test}}=256\). Thus, increasing \(N_{\mathrm{test}}\) can further
reduce the total error once the neural approximation error becomes comparable
to the empirical discretization error. This is seen in
Table~\ref{tab:2d_measure_pool_training} with \(M_{\mathrm{batch}}=100\), where
increasing \(N_{\mathrm{test}}\) from \(64\) to \(256\) leaves
\(\overline E_{\rm net}\) almost unchanged but reduces
\(\overline E_{\rm tot}\) from \(7.064\times10^{-3}\) to
\(5.330\times10^{-3}\).

\begin{table}[htbp]
	\centering
	\caption{
		Relative errors for the two-dimensional master equation using \(M_{\mathrm{batch}}\) empirical measures sampled per iteration from \(10\,000\) training measures.
	}
	\label{tab:2d_measure_pool_training}
	\setlength{\tabcolsep}{5.5pt}
	\renewcommand{\arraystretch}{1.12}
	\footnotesize
	\begin{tabular}{c cc cc c}
		\toprule
		\multirow{2}{*}{$M_{\mathrm{batch}}$}
		& \multicolumn{2}{c}{$\overline E_{\rm tot}$}
		& \multicolumn{2}{c}{$\overline E_{\rm net}$}
		& \multirow{2}{*}{Time (h)}
		\\
		\cmidrule(lr){2-3}
		\cmidrule(lr){4-5}
		& $N_{\mathrm{test}}=64$ & $N_{\mathrm{test}}=256$
		& $N_{\mathrm{test}}=64$ & $N_{\mathrm{test}}=256$
		&
		\\
		\midrule
		1
		& $2.045\times 10^{-1}$ & $2.045\times 10^{-1}$
		& $2.044\times 10^{-1}$ & $2.045\times 10^{-1}$
		& 0.30
		\\
		10
		& $1.696\times 10^{-2}$ & $1.628\times 10^{-2}$
		& $1.629\times 10^{-2}$ & $1.627\times 10^{-2}$
		& 0.54
		\\
		20
		& $1.479\times 10^{-2}$ & $1.390\times 10^{-2}$
		& $1.390\times 10^{-2}$ & $1.387\times 10^{-2}$
		& 0.88
		\\
		50
		& $1.180\times 10^{-2}$ & $1.068\times 10^{-2}$
		& $1.067\times 10^{-2}$ & $1.065\times 10^{-2}$
		& 1.61
		\\
		100
		& $7.064\times 10^{-3}$ & $5.330\times 10^{-3}$
		& $5.265\times 10^{-3}$ & $5.248\times 10^{-3}$
		& 2.87
		\\
		\bottomrule
	\end{tabular}
\end{table}

\begin{table}[htbp]
	\centering
	\caption{
		Relative errors for the two-dimensional master equation trained on a fixed set of \(M_{\mathrm{batch}}\) empirical measures.
	}
	\label{tab:2d_fixed_measure_training}
	\setlength{\tabcolsep}{5.5pt}
	\renewcommand{\arraystretch}{1.12}
	\footnotesize
	\begin{tabular}{c cc cc c}
		\toprule
		\multirow{2}{*}{$M_{\mathrm{batch}}$}
		& \multicolumn{2}{c}{$\overline E_{\rm tot}$}
		& \multicolumn{2}{c}{$\overline E_{\rm net}$}
		& \multirow{2}{*}{Time (h)}
		\\
		\cmidrule(lr){2-3}
		\cmidrule(lr){4-5}
		& $N_{\mathrm{test}}=64$ & $N_{\mathrm{test}}=256$
		& $N_{\mathrm{test}}=64$ & $N_{\mathrm{test}}=256$
		&
		\\
		\midrule
		1
		& $3.879\times 10^{-1}$ & $3.878\times 10^{-1}$
		& $3.878\times 10^{-1}$ & $3.878\times 10^{-1}$
		& 0.28
		\\
		10
		& $8.641\times 10^{-2}$ & $8.616\times 10^{-2}$
		& $8.611\times 10^{-2}$ & $8.614\times 10^{-2}$
		& 0.57
		\\
		20
		& $4.429\times 10^{-2}$ & $4.397\times 10^{-2}$
		& $4.390\times 10^{-2}$ & $4.396\times 10^{-2}$
		& 0.88
		\\
		50
		& $3.572\times 10^{-2}$ & $3.536\times 10^{-2}$
		& $3.536\times 10^{-2}$ & $3.537\times 10^{-2}$
		& 1.70
		\\
		100
		& $2.301\times 10^{-2}$ & $2.240\times 10^{-2}$
		& $2.243\times 10^{-2}$ & $2.242\times 10^{-2}$
		& 2.82
		\\
		\bottomrule
	\end{tabular}
\end{table}

Figure~\ref{fig:m_2d} shows a two-dimensional GRF density
\(\rho\) on \(\Omega=[0,1]^2\) together with the empirical particles. The heatmap represents the continuous density,
while the orange points form the empirical measure \(m^N\) used in the
computation.

\begin{figure}[htbp]
	\centering
	\includegraphics[width=0.58\textwidth]{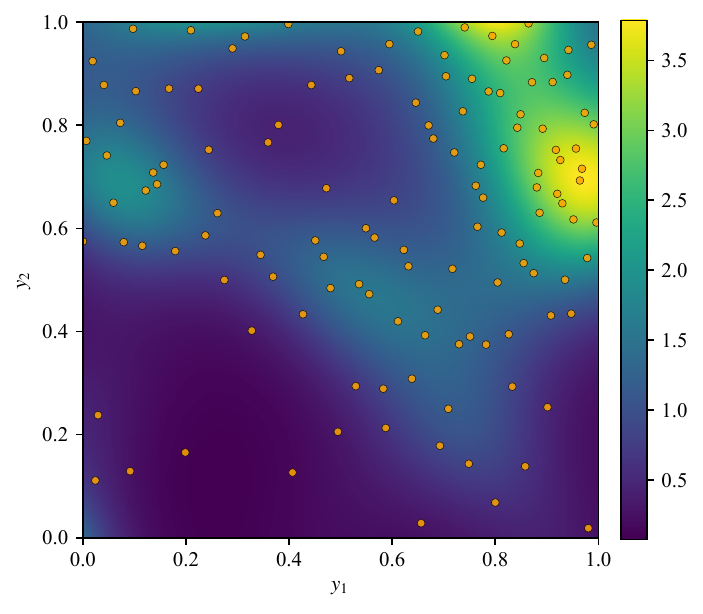}
	\caption{
		An example of a two-dimensional density $\rho$ with its associated empirical particles.
	}
	\label{fig:m_2d}
\end{figure}

We use the probability measure shown in Figure~\ref{fig:m_2d} to further evaluate the consistency of the network approximation to \(D_mU\). We evaluate \(D_mU(t,x,m,y)\) at
\((t,x_1,x_2)=(0.5,0.5,0.5)\) and plot its dependence on
\(y=(y_1,y_2)\in\Omega\). Since \(D_mU\) is a two-dimensional vector field, its
two components are displayed separately. Figure~\ref{fig:dm_slice_2d} compares
the exact intrinsic derivative, the network prediction, and the corresponding
pointwise absolute error. The predicted components accurately reproduce the
main spatial profiles of the exact derivative. The slice \(L^2\) relative errors are \(4.3059\times10^{-2}\) for \(D_{m,1}U\),
\(3.6746\times10^{-2}\) for \(D_{m,2}U\). These results
indicate that the error remains small relative to the magnitude of
\(D_mU\), showing that the learned operator captures not only the value
function but also its intrinsic measure derivative on this representative
two-dimensional test measure and its empirical approximation.

\begin{figure}[htbp]
	\centering
	
	% First row: D_{m,1}U
	\begin{subfigure}[t]{0.31\textwidth}
		\centering
		\includegraphics[width=\textwidth]{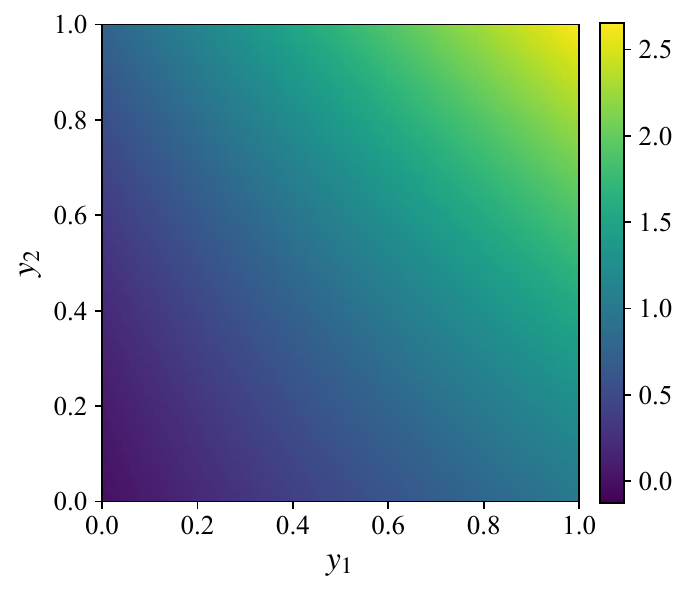}
	\end{subfigure}
	\hfill
	\begin{subfigure}[t]{0.31\textwidth}
		\centering
		\includegraphics[width=\textwidth]{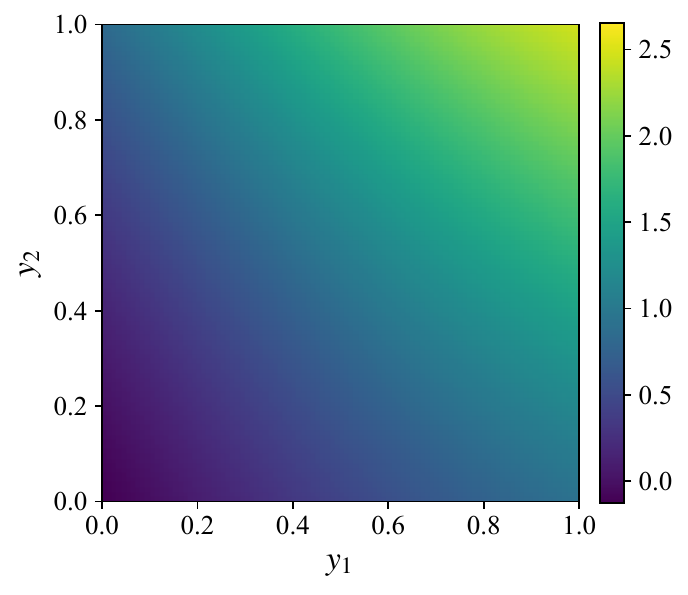}
	\end{subfigure}
	\hfill
	\begin{subfigure}[t]{0.32\textwidth}
		\centering
		\includegraphics[width=\textwidth]{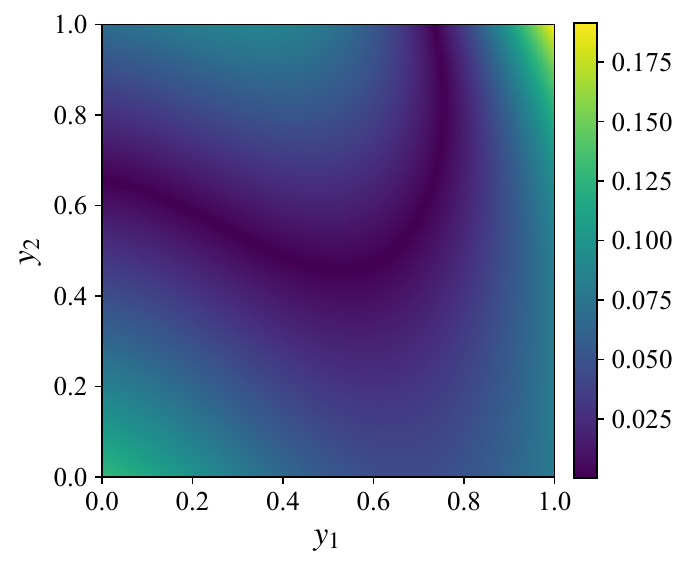}
	\end{subfigure}
	
	\vspace{0.6em}
	
	% Second row: D_{m,2}U
	\begin{subfigure}[t]{0.31\textwidth}
		\centering
		\includegraphics[width=\textwidth]{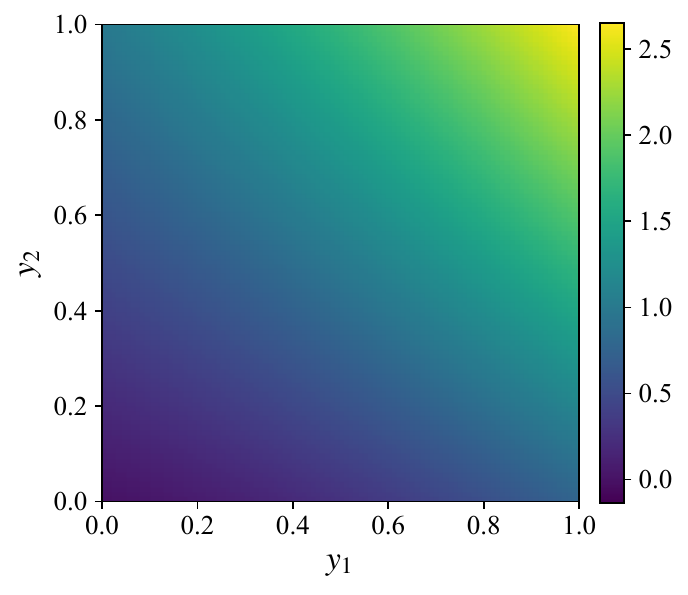}
		\caption{Exact $D_m$}
		\label{fig:dm2_exact}
	\end{subfigure}
	\hfill
	\begin{subfigure}[t]{0.31\textwidth}
		\centering
		\includegraphics[width=\textwidth]{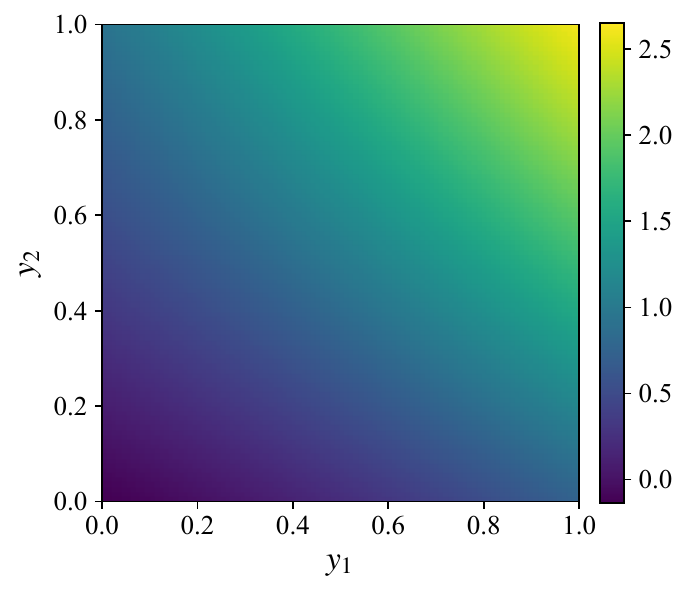}
		\caption{Predicted $D_m$}
		\label{fig:dm2_network}
	\end{subfigure}
	\hfill
	\begin{subfigure}[t]{0.31\textwidth}
		\centering
		\includegraphics[width=\textwidth]{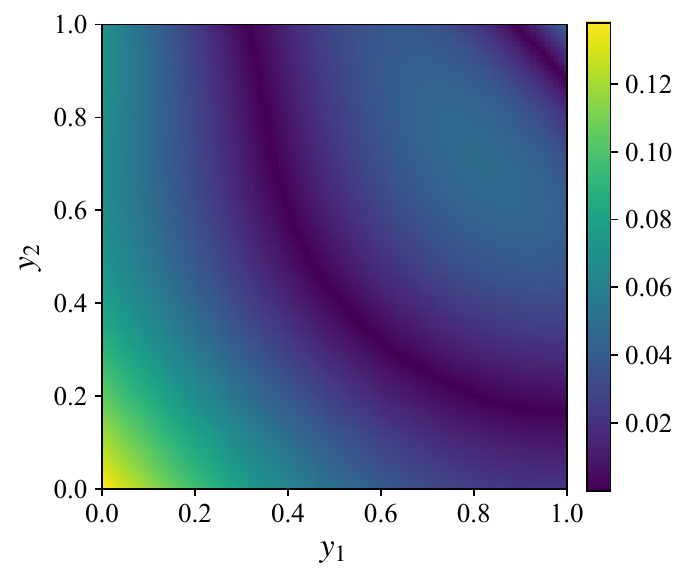}
		\caption{Absolute error}
		\label{fig:dm2_error}
	\end{subfigure}
	
	\caption{
		Comparison of the exact and predicted $D_mU$ at
		$(t,x_1,x_2)=(0.5,0.5,0.5)$.
		The first and second rows show $D_{m,1}U$ and $D_{m,2}U$, respectively.
	}
	\label{fig:dm_slice_2d}
\end{figure}

\subsection{Verification of the characteristic relation on \texorpdfstring{$\mathbb T^1$}{T1}}
\label{subsec:characteristic_relation}

We next verify the characteristic relation between the master equation and the
corresponding MFG system. Unlike the GRF-generated measures used during
training, the empirical measures in this test are constructed from a density
trajectory produced by an independently solved MFG system. This experiment
therefore assesses whether the learned operator can be evaluated along a
dynamically generated measure trajectory outside the GRF training distribution.
The test is carried out on the periodic domain
\(\mathbb T^1=\mathbb R/\mathbb Z\), with \(T=1\). In the implementation, the
torus is represented by the half-open interval \([0,1)\), so the endpoint is
not duplicated on the spatial grid. Periodicity is encoded by using the feature
map \(z\mapsto(\cos(2\pi z),\sin(2\pi z))\) for both the state variable and the
empirical particles.

We choose the quadratic Hamiltonian \(H(p)=p^2/2\). The running and terminal
costs are specified through a smooth nonlocal convolution coupling. For a
periodic kernel \(K_\sigma:\mathbb T^1\to\mathbb R\), define
\[
(K_\sigma*m)(x)
:=
\int_{\mathbb T^1}K_\sigma(x-z)\,m(\dd z).
\]
In the numerical tests, we take
\[
K_\sigma(x)
=
\exp\!\left(
-\frac{\sin^2(\pi x)}{2\sigma^2}
\right),
\qquad x\in\mathbb T^1,
\]
and set
\[
F(x,m)
=
a_F\cos(2\pi x)+\lambda_F(K_\sigma*m)(x),
\qquad
G(x,m)
=
a_G\cos(2\pi x)+\lambda_G(K_\sigma*m)(x),
\]
where \(a_F,a_G,\lambda_F,\lambda_G\in\mathbb R\) are prescribed constants.

For the density formulation of the MFG system, we use
\[
\begin{aligned}
	F_\rho(x,\rho)
	&:=
	a_F\cos(2\pi x)
	+
	\lambda_F\int_{\mathbb T^1}K_\sigma(x-z)\rho(z)\,\dd z,\\
	G_\rho(x,\rho)
	&:=
	a_G\cos(2\pi x)
	+
	\lambda_G\int_{\mathbb T^1}K_\sigma(x-z)\rho(z)\,\dd z .
\end{aligned}
\]
For a given initial density \(\rho_0\), the associated MFG system evolves a
value function \(u\) and a population density \(\rho\). It is written as
\begin{equation}
	\label{eq:MFG_system_T1_example}
	\left\{
	\begin{aligned}
		&-\partial_t u(t,x)-\nu\partial_{xx}u(t,x)
		+\frac12 |u_x(t,x)|^2
		=
		F_\rho\bigl(x,\rho(t,\cdot)\bigr),
		&& (t,x)\in [0,T)\times\mathbb T^1,\\
		&\partial_t \rho(t,x)-\nu\partial_{xx}\rho(t,x)
		-\partial_x\!\bigl(\rho(t,x)u_x(t,x)\bigr)
		=0,
		&& (t,x)\in (0,T]\times\mathbb T^1,\\
		&u(T,x)=G_\rho\bigl(x,\rho(T,\cdot)\bigr),
		&& x\in\mathbb T^1,\\
		&\rho(0,\cdot)=\rho_0,
		&& \text{on }\mathbb T^1.
	\end{aligned}
	\right.
\end{equation}
We use the initial density
\begin{equation*}
	\rho_0(x)=1+0.3\cos(2\pi x),
	\qquad x\in\mathbb T^1 .
\end{equation*}
The density \(\rho(t,\cdot)\) induces the time-dependent probability measure
\begin{equation}
	\label{eq:systemrho}
	m(t)(\dd z)=\rho(t,z)\,\dd z .
\end{equation}
This measure is used below to connect the density-based MFG system with the
master equation.

The corresponding master equation is posed in
\((0,T)\times\mathbb T^1\times\mathcal P(\mathbb T^1)\):
\begin{equation}
	\label{eq:master_equation_T1_example}
	\left\{
	\begin{aligned}
		&-\partial_t U(t,x,m)-\nu\partial_{xx}U(t,x,m)
		+\frac12\bigl|\partial_xU(t,x,m)\bigr|^2 \\
		&\quad
		-\nu\int_{\mathbb T^1}
		\partial_y\!\left[D_mU(t,x,m,y)\right]\,m(\dd y)
		+\int_{\mathbb T^1}
		D_mU(t,x,m,y)\,
		\partial_xU(t,y,m)\,m(\dd y)
		=
		F(x,m),\\
		&\hspace{9.2cm}
		(t,x,m)\in [0,T)\times\mathbb T^1\times\mathcal P(\mathbb T^1),\\
		&U(T,x,m)=G(x,m),
		\qquad
		(x,m)\in\mathbb T^1\times\mathcal P(\mathbb T^1).
	\end{aligned}
	\right.
\end{equation}
The coefficient \(\nu\) in the nonlocal diffusion term is the same as the
diffusion coefficient in the Fokker--Planck equation. The parameters are chosen as \(\nu=0.1\), \(\sigma=0.2\), \(a_F=0.6\), \(\lambda_F=0.5\), \(a_G=0.2\), \(\lambda_G=0.3\).

This problem provides a direct numerical check of the characteristic relation.
If \(U\) solves \eqref{eq:master_equation_T1_example} and \((u,\rho)\) solves
\eqref{eq:MFG_system_T1_example}, then the density \(\rho(t,\cdot)\) generated
by the MFG system induces the probability measure
\(m(t)(\dd z)=\rho(t,z)\,\dd z\), which serves as the measure argument of the
master equation along the MFG trajectory. Formally, this gives
\begin{equation}
	\label{eq:characteristic_relation_example}
	u(t,x)=U\bigl(t,x,m(t)\bigr),
	\qquad
	(t,x)\in[0,T]\times\mathbb T^1 .
\end{equation}
Thus, the density variable in the MFG system and the measure variable in the
master equation are linked through \eqref{eq:systemrho}. After training
\(U_\Theta\), we evaluate the learned operator along the measure
trajectory \(m(t)\) and compare it with the independently computed MFG value function.

More precisely, the MFG system \eqref{eq:MFG_system_T1_example} is solved by an
independent finite-difference solver \cite{Achdou2010} on
\([0,1]\times\mathbb T^1\), where \(\mathbb T^1\) is discretized as the
periodic interval \([0,1)\). We use a uniform grid with
\(\Delta t=\Delta x=10^{-3}\), giving the reference data
\[
\bigl\{
u^{\rm ref}(t_n,x_i),\,
\rho^{\rm ref}(t_n,x_i)
\bigr\}_{0\le n\le N_t,\;0\le i\le N_x-1},
\]
where \(t_n=n\Delta t\) and \(x_i=i\Delta x\). For each time level \(t_n\), the
reference density \(\rho^{\rm ref}(t_n,\cdot)\) defines the reference
probability measure
\[
m^{\rm ref}(t_n)(\dd z)
=
\rho^{\rm ref}(t_n,z)\,\dd z .
\]
We construct the network input \(m_n^N\) by first interpolating the grid-based
reference density \(\rho^{\rm ref}(t_n,\cdot)\) on \(\mathbb T^1\), and then
applying the ICDF sampling procedure described in
Subsection~\ref{subsec:continuous_to_empirical_measures}. This yields
\[
m_n^N
=
\frac1N\sum_{j=1}^N\delta_{\xi_j^{(n)}}
\approx
m^{\rm ref}(t_n),
\]
which is used as the network input through the particles
\(\{\xi_j^{(n)}\}_{j=1}^N\). The comparison is performed between
\(u^{\rm ref}(t_n,x_i)\) and \(U_\Theta(t_n,x_i,m_n^N)\) over the full
space--time grid.

Figure~\ref{fig:mfg_master_trajectory_comparison} verifies the characteristic relation along an independently computed MFG trajectory. The reference solution \(u^{\rm ref}(t,x)\) and the MCOL prediction \(U_\Theta(t,x,m_n^N)\) are shown in Figures~\ref{fig:mfg_master_trajectory_comparison}(a) and~\ref{fig:mfg_master_trajectory_comparison}(b), respectively. The two profiles agree well over the full space--time domain, including the time-dependent variation induced by the evolving measure \(m(t)\). Figure~\ref{fig:mfg_master_trajectory_comparison}(c) shows the reference density \(\rho^{\rm ref}(t,x)\), from which the empirical measures \(m_n^N\) are constructed at each time level. Although this density trajectory is generated by an independent MFG solver rather than by the GRF sampling procedure used in training, the learned operator remains accurate along the trajectory. The absolute error in Figure~\ref{fig:mfg_master_trajectory_comparison}(d) is small throughout most of the domain, with larger values mainly localized near the terminal time. The \(L^2\) relative error over the full space--time grid is \(1.78\times 10^{-2}\). These results confirm that the learned master-equation operator is consistent with \eqref{eq:characteristic_relation_example} when evaluated on a dynamically generated non-GRF measure trajectory.

\begin{figure}[htbp]
	\centering

	\begin{subfigure}[t]{0.46\textwidth}
		\centering
		\includegraphics[width=\textwidth]{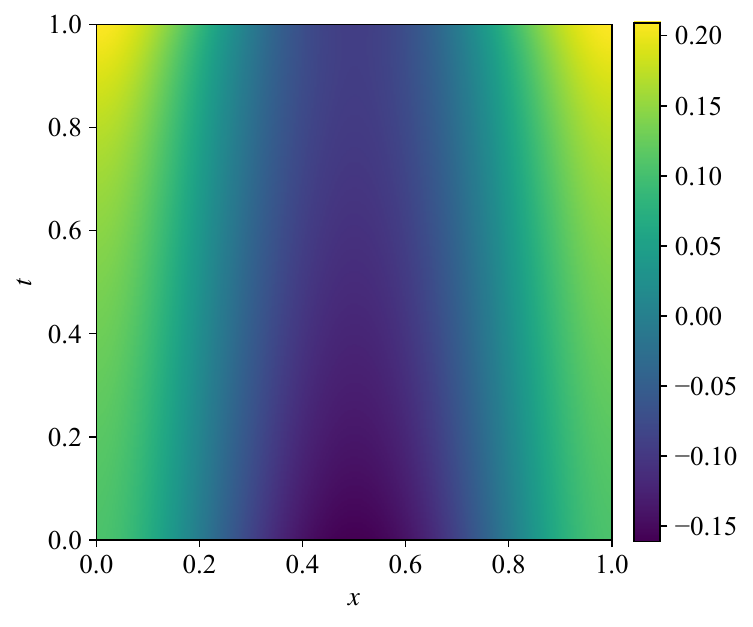}
		\caption{MFG system reference solution $u^{\mathrm{ref}}(t,x)$}
		\label{fig:mfg_reference_u}
	\end{subfigure}
	\hfill
	\begin{subfigure}[t]{0.46\textwidth}
		\centering
		\includegraphics[width=\textwidth]{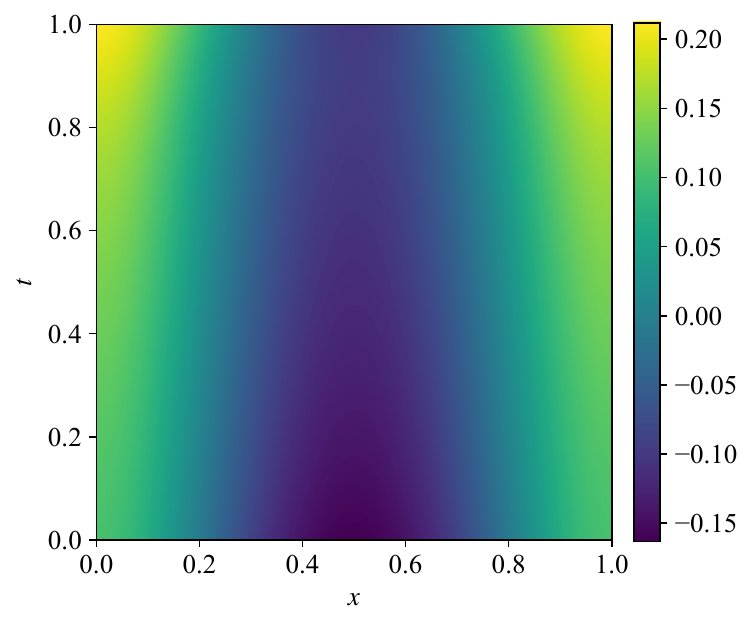}
		\caption{MCOL prediction $U_\Theta(t,x,m_n^N)$}
		\label{fig:master_network_prediction_u}
	\end{subfigure}
	
	\vspace{0.6em}
	\begin{subfigure}[t]{0.46\textwidth}
		\centering
		\includegraphics[width=\textwidth]{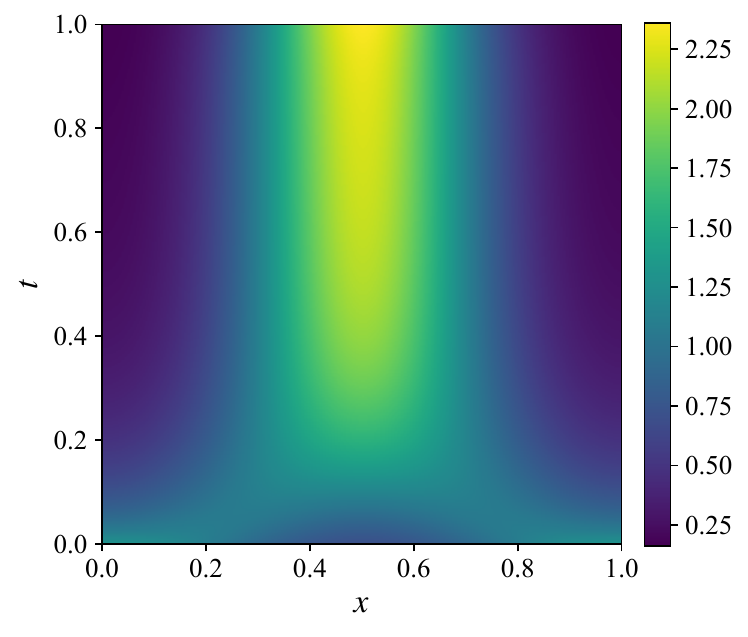}
		\caption{Reference density $\rho^{\mathrm{ref}}(t,x)$}
		\label{fig:mfg_density_ref}
	\end{subfigure}
	\hfill
	\begin{subfigure}[t]{0.46\textwidth}
		\centering
		\includegraphics[width=\textwidth]{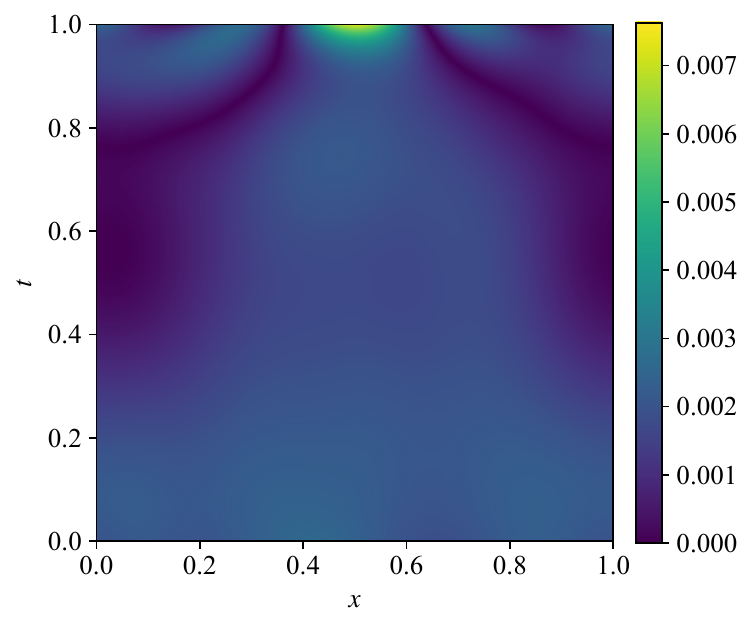}
		\caption{Absolute error}
		\label{fig:mfg_master_absolute_error}
	\end{subfigure}
	
	\caption{
		Verification of the characteristic relation along an MFG trajectory.
	}
	\label{fig:mfg_master_trajectory_comparison}
\end{figure}

\subsection{A systemic-risk problem with common noise}
\label{subsec:systemic_risk_common_noise}

To further test the proposed method in a practically motivated setting, we
consider a systemic-risk problem with common noise. In this model, the state
variable \(x\) represents the reserve level of a representative financial
institution, while the probability measure \(m\in\mathcal P(\Omega)\) describes
the distribution of reserve levels in the population. The state space is
\(\Omega=\mathbb R\), and the mean reserve level is denoted by
\[
\bar x_m:=\int_{\Omega} z\,m(dz).
\]
The running and terminal costs penalize deviations from the population mean,
and the associated feedback control models a stabilizing adjustment toward the
mean reserve level.

We consider the following master equation
\cite{bensoussan2015master}:
\begin{equation*}
	\left\{
	\begin{aligned}
		&-\partial_t U(t,x,m)
		-\frac{\sigma^2+\beta^2}{2}\Delta_x U(t,x,m)
		-\frac{\sigma^2+\beta^2}{2}
		\int_{\Omega}\operatorname{div}_y\!\left[D_mU(t,x,m,y)\right]m(dy)
		\\
		&\quad
		-\int_{\Omega}
		D_mU(t,x,m,y)
		\left[
		(\alpha+\lambda)(\bar x_m-y)-D_xU(t,y,m)
		\right]m(dy)
		-\beta^2\int_{\Omega}D_xD_mU(t,x,m,y)\,m(dy)
		\\
		&\quad
		-\frac{\beta^2}{2}
		\int_{\Omega}\int_{\Omega}
		\operatorname{div}_y\operatorname{div}_z
		\frac{\delta^2U}{\delta m^2}(t,x,m)(y,z)\,
		m(dy)m(dz)
		\\
		&=
		\frac{\mu-\lambda^2}{2}(\bar x_m-x)^2
		+(\alpha+\lambda)(\bar x_m-x)D_xU(t,x,m)
		-\frac12\left|D_xU(t,x,m)\right|^2,
		\\
		&\hspace{8.2cm}
		(t,x,m)\in [0,1)\times\Omega\times\mathcal P(\Omega),
		\\[0.5em]
		&U(1,x,m)=G(x,m):=\frac c2(\bar x_m-x)^2,
		\qquad (x,m)\in\Omega\times\mathcal P(\Omega).
	\end{aligned}
	\right.
\end{equation*}
Here \(\alpha>0\), \(\sigma>0\), \(\beta>0\), \(\lambda>0\),
\(\mu>\lambda^2\), and \(c>0\). Compared with the previous examples, the
common-noise equation contains two additional nonlocal contributions, involving
\(D_xD_mU\) and the second variation \(\delta^2U/\delta m^2\). Both
contributions are therefore included explicitly in the empirical residual used
for training. The equation and the explicit solution are defined on the full
space \(\Omega=\mathbb R\). In the numerical experiments, the residual
collocation points and error evaluation are restricted to the computational
window \(\Omega_{\rm num}:=[-2,2]\). Accordingly, the reported results are
evaluated on \([0,1]\times\Omega_{\rm num}\), and no artificial boundary
condition is imposed at \(x=\pm2\).

The two common-noise contributions require an extension of the
empirical residual in Section~\ref{subsec:residual_construction}. For a
PDE collocation point
\((t^{\mathrm{pde}}_{k,q},x^{\mathrm{pde}}_{k,q})\) associated with
\(m^{N,(k)}=N^{-1}\sum_{i=1}^N\delta_{\xi_i^{(k)}}\), the term \(\int_{\Omega}D_xD_mU(t,x,m,y)\,m(dy)\) is evaluated by
\begin{equation}
	\label{eq:systemic_extra_xm_empirical}
	J^{xm}_{\Theta,k,q}
	:=
	\frac1N\sum_{i=1}^N
	\partial_x D_mU_\Theta
	\bigl(t^{\mathrm{pde}}_{k,q},x^{\mathrm{pde}}_{k,q},m^{N,(k)},\xi_i^{(k)}\bigr).
\end{equation}
The second variation is obtained from the same MCOL architecture. More precisely, if
\(r=z_\eta(m)\), then a representative of the second variation is
\begin{equation*}
	\frac{\delta^2U_\Theta}{\delta m^2}(t,x,m)(y,z)
	=
	\phi_\eta(y)^\top
	D^2_{rr}\mathcal U_\Theta(t,x,r)\big|_{r=z_\eta(m)}
	\phi_\eta(z),
\end{equation*}
up to additive normalization terms that vanish after differentiating in
\((y,z)\). Hence the empirical approximation of the divergence
term is
\begin{equation}
	\label{eq:systemic_extra_mm_empirical}
	J^{mm}_{\Theta,k,q}
	:=
	\frac1{N^2}\sum_{i=1}^N\sum_{j=1}^N
	\partial_y\partial_z
	\frac{\delta^2U_\Theta}{\delta m^2}
	\bigl(t^{\mathrm{pde}}_{k,q},x^{\mathrm{pde}}_{k,q},m^{N,(k)}\bigr)
	(\xi_i^{(k)},\xi_j^{(k)}).
\end{equation}

Let \(P:[0,1]\to\mathbb R\) be the solution of the backward Riccati equation
\begin{equation*}
	\begin{cases}
		\displaystyle
		\frac{dP}{dt}
		-2(\alpha+\lambda)P
		-P^2
		+\mu-\lambda^2=0,
		\qquad 0\le t<1,\\[0.4em]
		P(1)=c .
	\end{cases}
\end{equation*}
Then the master equation admits the exact solution
\begin{equation*}
	U(t,x,m)
	=
	\frac12 P(t)\bigl(x-\bar x_m\bigr)^2
	+
	\frac{\sigma^2}{2}\int_t^1 P(s)\,ds .
\end{equation*}
The corresponding exact feedback control is
\begin{equation}
	\label{eq:systemic_risk_exact_feedback_control}
	v^*(t,x,m)
	=
	\bigl(\lambda+P(t)\bigr)(\bar x_m-x).
\end{equation}
The control pushes reserve levels below the mean upward and reserve
levels above the mean downward, which is consistent with the stabilizing
mean-reversion mechanism of the systemic-risk model.

Figure~\ref{fig:systemic_risk_U_vstar_slices} compares the exact and predicted
value functions and feedback controls for a representative empirical measure
\(m^N\) at \(t=0\), \(0.5\), and \(1\). The value function exhibits the expected
quadratic dependence on the deviation from the mean reserve level
\(\bar x_m\), while the feedback control is affine in \(x\) and vanishes at
\(x=\bar x_m\), as given by
\eqref{eq:systemic_risk_exact_feedback_control}. Its sign is consistent with
the mean-reversion mechanism of the systemic-risk model: institutions below the
population mean are driven upward, whereas those above the mean are adjusted
downward. The MCOL approximation agrees well with the exact profiles for both
\(U\) and \(v^*\) at all three time levels. In particular, the terminal slice
recovers the prescribed terminal cost, and the earlier slices show that the
learned operator captures the backward evolution of the value function. Since
the feedback control is obtained from \(D_xU_\Theta\), the agreement in \(v^*\)
also supports the accuracy of the learned state derivative. These results show
that the proposed method can handle the common-noise master equation associated
with the systemic-risk model on an unbounded state space.

\begin{figure}[htbp]
	\centering
	
	% ================= Left column: U =================
	\begin{subfigure}[t]{0.48\textwidth}
		\centering
		
		\includegraphics[width=\textwidth]{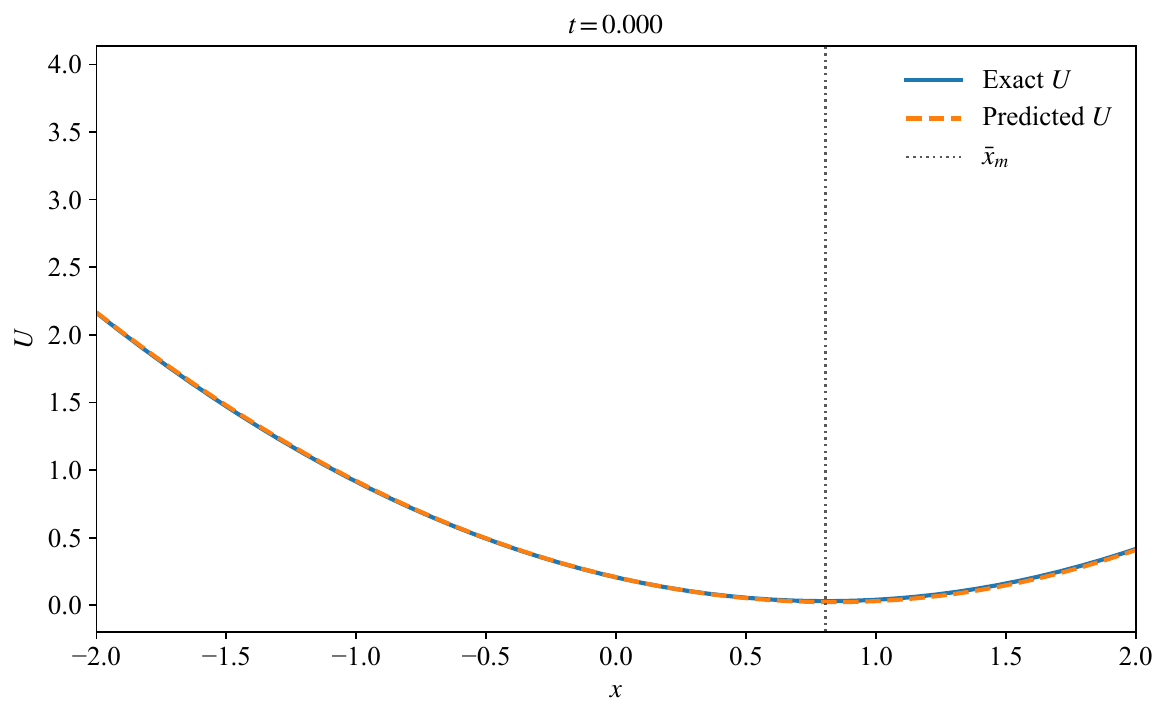}
		
		\vspace{0.35em}
		
		\includegraphics[width=\textwidth]{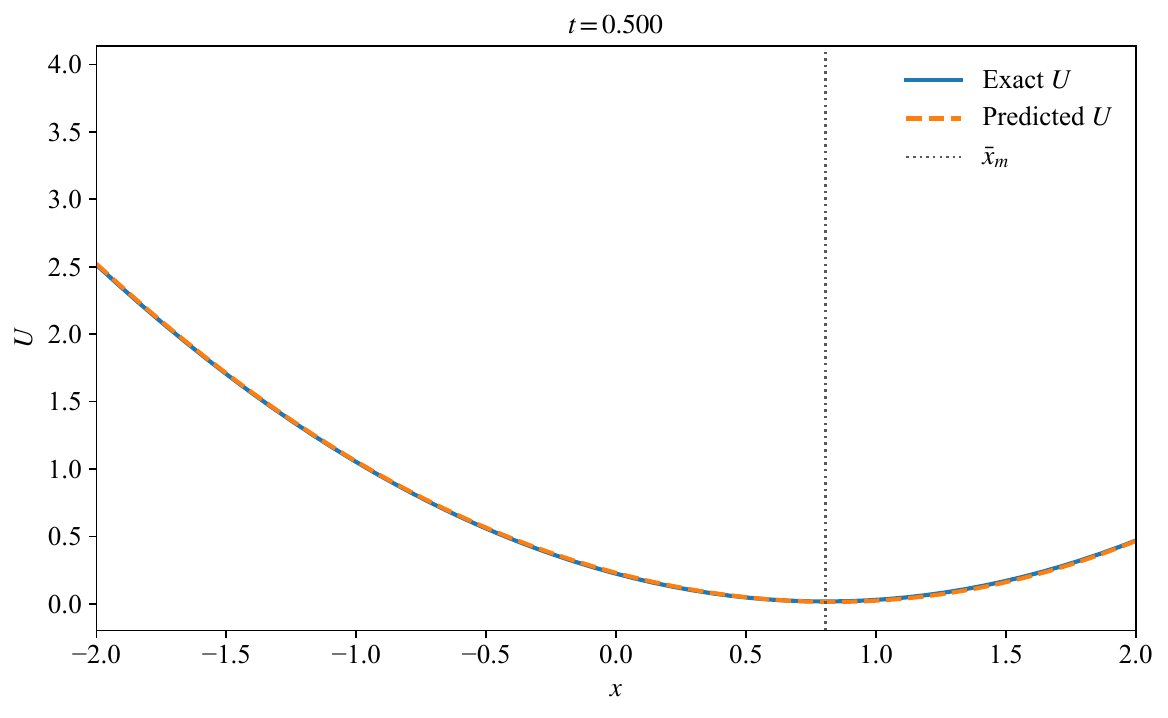}
		
		\vspace{0.35em}
		
		\includegraphics[width=\textwidth]{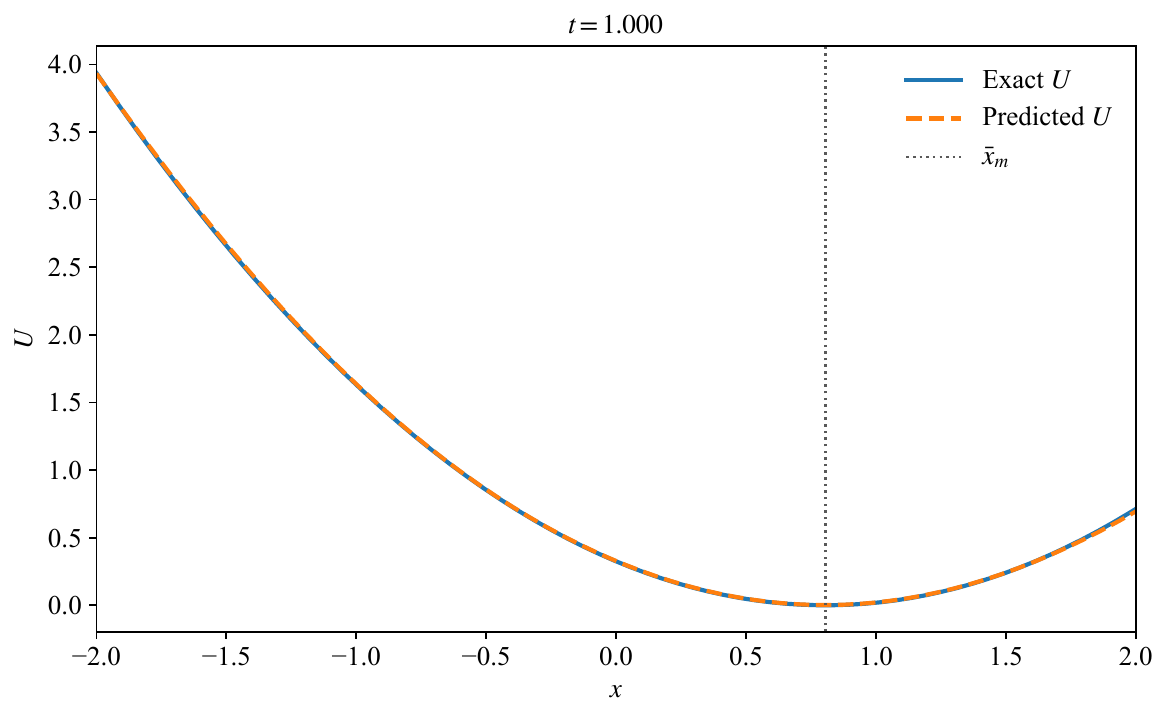}
		
		\caption{Value function \(U(t,x,m^N)\).}
		\label{fig:systemic_risk_U_slices}
	\end{subfigure}
	\hfill
	% ================= Right column: v* =================
	\begin{subfigure}[t]{0.48\textwidth}
		\centering
		
		\includegraphics[width=\textwidth]{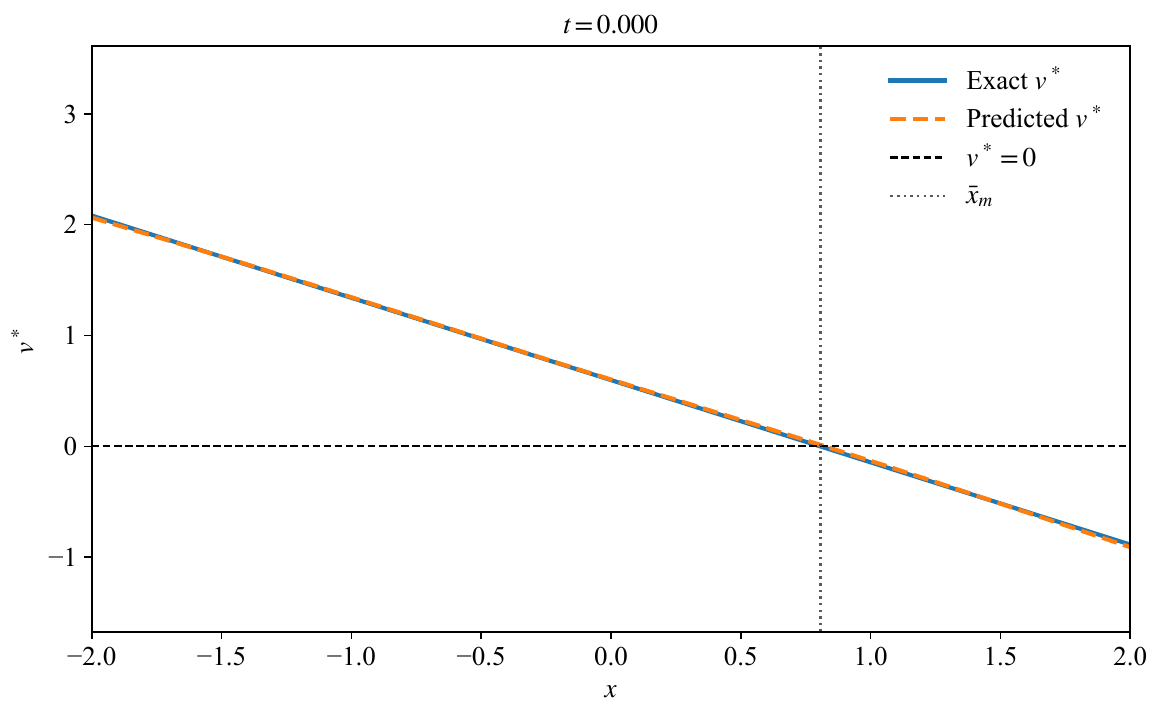}
		
		\vspace{0.35em}
		
		\includegraphics[width=\textwidth]{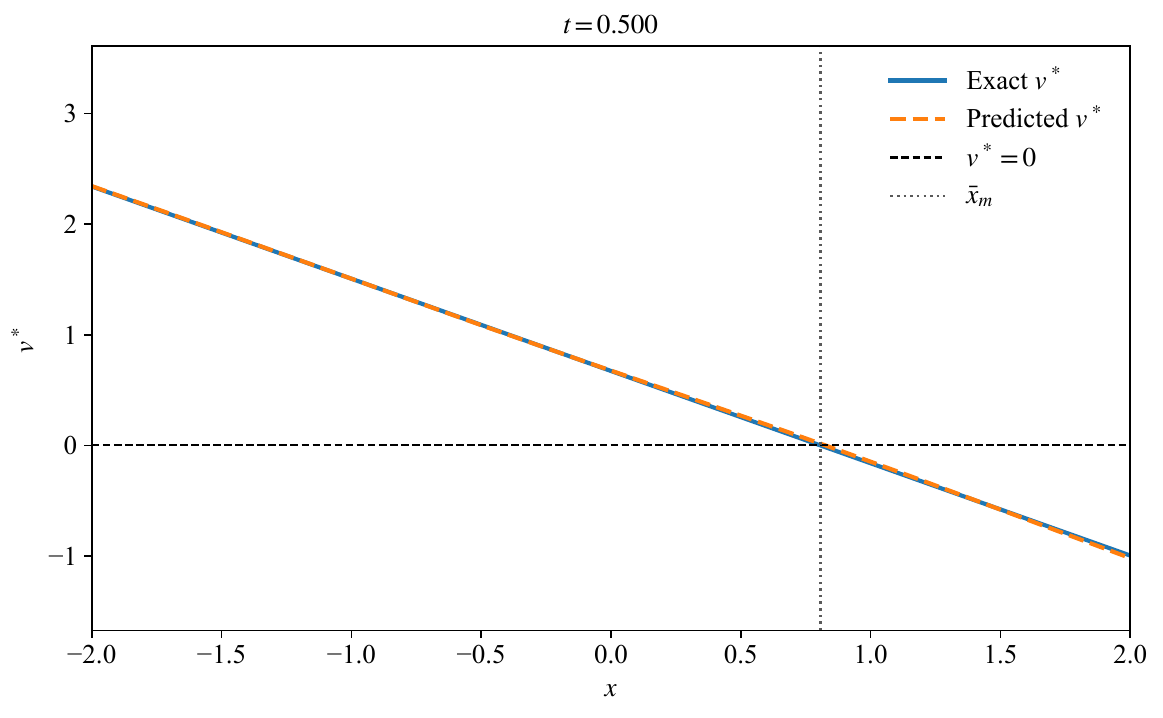}
		
		\vspace{0.35em}
		
		\includegraphics[width=\textwidth]{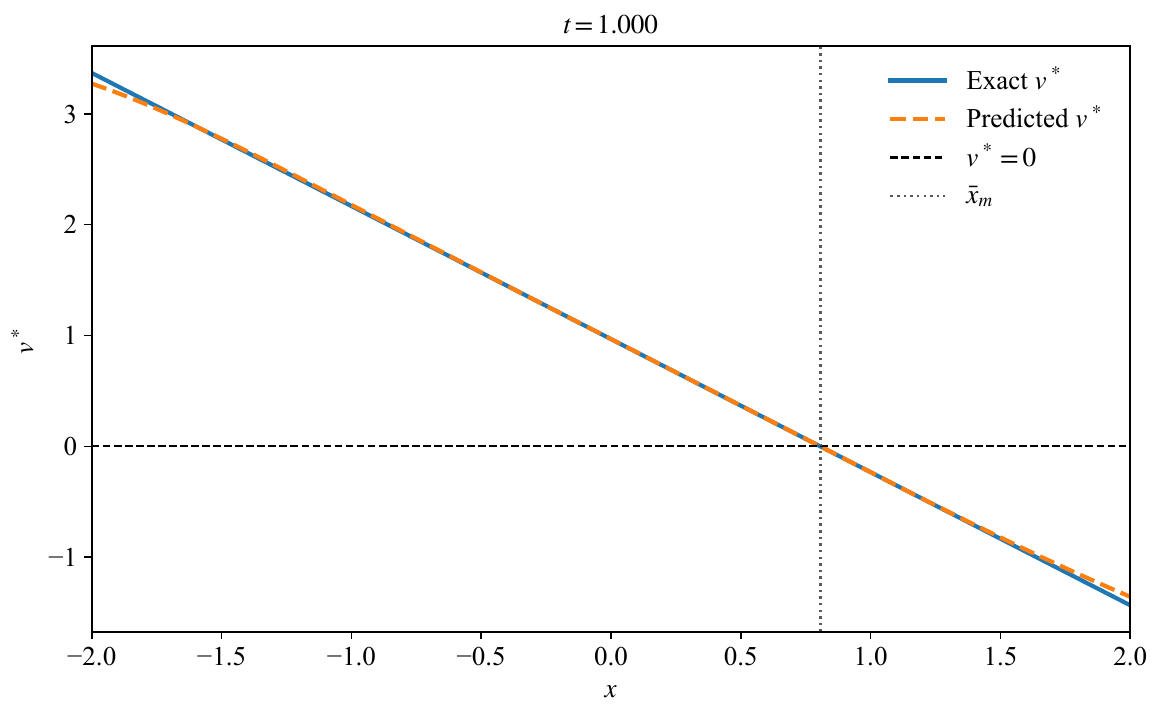}
		
		\caption{Optimal control \(v^*(t,x,m^N)\).}
		\label{fig:systemic_risk_vstar_slices}
	\end{subfigure}
	
	\caption{
		Time-slice comparisons of the value function and feedback control.
	}
	\label{fig:systemic_risk_U_vstar_slices}
\end{figure}

\section{Conclusion}
\label{sec:conclusion}

This paper developed a measure-consistent operator learning method for
infinite-dimensional master equations arising in MFG theory. The
population distribution is represented by empirical particles, and the same
empirical measure is used both as the network input and as the quadrature
measure for the nonlocal terms in the residual. A central feature of the method
is that the intrinsic derivative \(D_mU_\Theta\) is induced by the same
measure-dependent representation that defines the value approximation
\(U_\Theta\). Hence, the value function, its intrinsic measure derivative, and
the empirical residual are tied to a common representation of the measure
variable, rather than being approximated or assembled separately.

The numerical results show that this structure leads to accurate and stable
approximations across several representative master equations. The experiments
confirm that the proposed method can approximate both the value function and
the intrinsic derivative, and remain effective for empirical measures not seen during training. The reported error decomposition further separates the neural approximation error from the empirical discretization error, providing a clearer assessment of the
effect of particle resolution. These results indicate that enforcing consistency between the measure representation, the induced intrinsic derivative, and the residual assembly is beneficial for learning master equations. Future work will consider more general state spaces and adaptive sampling of empirical measures.

\appendix

\section{Baseline comparison}
\label{sec:baseline}

For comparison with MCOL, we introduce a parameterized PINN baseline with an auxiliary network for \(D_m\). This baseline is trained for the same master equation \eqref{eq:master_general_polished} under the same experimental setting, including the Hamiltonian, source term, terminal datum, boundary conditions, empirical training measures, collocation strategy, and optimizer schedule. The comparison therefore isolates the effect of the measure representation and the induced construction of the intrinsic derivative.

Let
\[
m^N=\frac1N\sum_{i=1}^N\delta_{\xi_i},
\qquad
\boldsymbol\xi=(\xi_1,\ldots,\xi_N)\in\Omega^N .
\]
The baseline represents the empirical measure by the particle vector \(\boldsymbol\xi\), and the value function is approximated by a fully connected network
\[
\widehat U_\Psi(t,x,\boldsymbol\xi)
\approx
U(t,x,m^N).
\]

To remain consistent with the implemented baseline and in the spirit of mixed residual formulations for high-order PDEs \cite{Lyu2022MIM}, we introduce an auxiliary neural network to approximate the intrinsic derivative:
\[
\widehat D_\Phi(t,x,y,\boldsymbol\xi)
\approx
D_mU(t,x,m^N,y),
\qquad
\widehat D_\Phi(t,x,y,\boldsymbol\xi)\in\mathbb R^d .
\]
In the baseline residual, \(U\) and \(D_mU\) are replaced by
\(\widehat U_\Psi\) and \(\widehat D_\Phi\), respectively, while the remaining discretization is the same as in the proposed MCOL method.

The main structural difference is that the baseline does not satisfy the measure-consistent relation between the value function and the intrinsic derivative. For an empirical measure \(m^N=N^{-1}\sum_{i=1}^N\delta_{\xi_i}\), this relation formally reads
\[
D_mU(t,x,m^N,\xi_i)
=
N\nabla_{\xi_i}U(t,x,m^N),
\qquad i=1,\ldots,N .
\]
In the proposed MCOL method, the corresponding identity is built into the measure encoder. In the baseline, however, \(\widehat U_\Psi\) and \(\widehat D_\Phi\) are two independent networks. Hence, the above relation is not built into the architecture and must be imposed only weakly through an additional consistency penalty.

For the empirical training measures
\[
m^{N,(k)}
=
\frac1N\sum_{i=1}^N\delta_{\xi_i^{(k)}},
\qquad
\boldsymbol\xi^{(k)}
=
(\xi_1^{(k)},\ldots,\xi_N^{(k)}),
\qquad
k=1,\ldots,M_{\mathrm{batch}},
\]
we define the chain-rule consistency loss by
\[
\widehat{\mathcal L}_{\mathrm{chain}}(\Psi,\Phi)
:=
\frac1{M_{\mathrm{batch}} Q_{\mathrm{pde}}N}
\sum_{k=1}^{M_{\mathrm{batch}}}
\sum_{q=1}^{Q_{\mathrm{pde}}}
\sum_{i=1}^N
\left|
\widehat D_\Phi
\bigl(
t^{\mathrm{pde}}_{k,q},
x^{\mathrm{pde}}_{k,q},
\xi_i^{(k)},
\boldsymbol\xi^{(k)}
\bigr)
-
N\nabla_{\xi_i}
\widehat U_\Psi
\bigl(
t^{\mathrm{pde}}_{k,q},
x^{\mathrm{pde}}_{k,q},
\boldsymbol\xi^{(k)}
\bigr)
\right|^2 .
\]
This term penalizes the mismatch between the explicitly learned intrinsic derivative and the particle gradient of the value network. It should therefore be interpreted as a weak consistency constraint, not as an architectural identity.

The total baseline loss is
\[
\widehat{\mathcal L}_{\mathrm{base}}(\Psi,\Phi)
=
\lambda_{\mathrm{pde}}
\widehat{\mathcal L}_{\mathrm{pde}}(\Psi,\Phi)
+
\lambda_T
\widehat{\mathcal L}_{T}(\Psi)
+
\lambda_{\partial x}
\widehat{\mathcal L}_{\partial x}(\Psi)
+
\lambda_{\partial m}
\widehat{\mathcal L}_{\partial m}(\Phi)
+
\lambda_{\mathrm{chain}}
\widehat{\mathcal L}_{\mathrm{chain}}(\Psi,\Phi).
\]
Here, \(\widehat{\mathcal L}_{\mathrm{pde}}\), \(\widehat{\mathcal L}_{T}\), \(\widehat{\mathcal L}_{\partial x}\), and \(\widehat{\mathcal L}_{\partial m}\) are defined in the same way as in the proposed MCOL method after replacing \(U_\Theta\) by \(\widehat U_\Psi\) and \(D_mU_\Theta\) by \(\widehat D_\Phi\).

\section*{Acknowledgments}
The research was supported by the Hong Kong RGC General Research Funds (projects 11311122, 12301420, and 11300821), the National Natural Science Foundation of China (grant 12571424), and the Shenzhen Science and Technology Program (grants RCJC20210609103755110 and JCYJ20240813104914020).

\bibliographystyle{elsarticle-num}
\bibliography{ref1}

\end{document}